\theoremstyle{plain}
\newtheorem{thm}{Theorem}[section]
\newtheorem{lem}[thm]{Lemma}
\newtheorem{exm}{Example}
\theoremstyle{definition}
\begin{document}

\title{Quantization of two- and three-player cooperative games based on QRA}

\author{Ivan Eryganov, Jaroslav Hrdina and Ale\v s N\'avrat}

\address{ Institute of Mathematics, Faculty of Mechanical Engineering,  Brno University of Technology, Technick\' a 2896/2, 616 69 Brno, Czech Republic}

\email{Ivan.Eryganov@vut.cz, \{hrdina,a.navrat\}@fme.vutbr.cz}	

\begin{abstract}
In this paper, a novel quantization scheme for cooperative games is proposed. The considered circuit is inspired by the Eisert-Wilkens-Lewenstein protocol modified to represent cooperation between players and extended to $3$--qubit states. The framework of Clifford algebra is used to perform necessary computations. In particular, we use a direct analogy between Dirac formalism and Quantum Register Algebra to represent circuits. This analogy enables us to perform automated proofs of the circuit equivalence in a simple fashion. To distribute players' payoffs after the measurement, the expected value of the Shapley value with respect to quantum probabilities is employed. We study how entanglement, representing the level of pre-agreement between players, affects the final distribution of utility. The paper also demonstrates how all necessary calculations can be automatized using the Quantum Register Algebra and GAALOP software. 
\end{abstract}

\keywords{Quantum games; Cooperative games; Quantum computing; Shapley value; Quantum register algebra}

\maketitle

\section{Introduction}
Game theory is a branch of applied mathematics that studies optimal decisions of entities between which conflict of interests has occurred \cite{Neumann}. This is done via formalization of the conflict into a game, where these entities take the role of players with clearly defined strategies (set of possible decisions) and payoff functions, describing utilities that can be obtained by the players \cite{Owen}. Cooperative game theory enables to study situations, where players are able to form coalitions in order to cooperate and improve their utilities \cite{Peleg}. In classical (superadditive) cooperative games, the main interest dwells in finding a fair allocation of the utility, produced by the grand coalition, with respect to players' contributions \cite{Saad}. 

Recently, game theory has been enriched with a new class of quantum games \cite{Flitney}. Originally, quantum game theory studied the quantization of non-cooperative matrix-form games, such as the well-known Prisoner's Dilemma \cite{Poundstone}. One of the most pioneering works on this topic \cite{Eisert} has studied new non-trivial equilibrium, which occurs due to the quantization of strategies and full entanglement between players' decision states. Though it has been an object of criticism \cite{Ben} (the considered strategy sets have no physical interpretation), this work has established the fundamental basis of contemporary research in quantum games \cite{Eryganov,bosc}. General quantum games can be characterized as games that include superposition of players' strategies and/or entanglement of players' decisions \cite{Flitney}.

In Eisert-Wilkens-Lewenstein (EWL) quantization protocol \cite{Eisert}, the decisions of the player to cooperate or deflect cooperation have been assigned to basis states of the qubit (quantum analogy of the classical bit \cite{Lima}). In this work, we extend this idea to describe the cooperative game of $n$-players by assigning basis states of the $n$--qubit to coalitions, where $1$ on $i$-th position means, that $i$-th player participates in this coalition, while $0$ on the same position identifies, that player is absent in the coalition. Whereas the proposed idea has the potential to be generalized into the domain of all cooperative games,  in this work, we study quantization protocols only for two- and three-player games. 

The main interest is to assess the potential of our quantization approach to cooperative games and to find out if it will allow for non-trivial results due to quantum phenomena. All the necessary calculations will be performed using geometric algebra \cite{dl,dhsa,hit,per,hil1,Lounesto}. The recent papers \cite{Cafaro,Alves} demonstrate an increasing number of applications of geometric algebras to quantum computing. In particular, we represent all the underlying quantum circuits using the language of Quantum Register Algebra (QRA) \cite{Hrdina2023}, a real form of complex Clifford algebra, \cite{hrdina2022quantum,Eryganov}. It should be emphasized, that complex Clifford algebra can be seen as a viable and intuitive symbolic alternative to the well-known fermionic quantum computation model \cite{Bravyi,Tilly}.
This enables us to study the properties of the proposed quantization schemes and to perform the automated proof of their equivalence. To validate the proposed approach, we "solve" instances of two- and three-player weighted majority cooperative games. The QRA representations of the corresponding $2$-- and $3$--qubit decision states and quantum gates on them will be programmed in GAALOP \cite{Alves,hil3}, which makes it possible to directly obtain the final state of the game, from which the probabilities will be extracted.

The rest of the paper is structured as follows. In Section 2, we briefly discuss cooperative game theory and its solution concept of the Shapley value \cite{Shapley}. Then, Section 3 is devoted to the introduction of the original quantization protocol for 2- and 3-player cooperative games. After that, in Section 4, we demonstrate how QRA can be applied to conveniently represent quantum computing and study the properties of quantum circuits. Section 5 presents a detailed description of the QRA representations of the considered quantum games. In the end, we demonstrate how quantum cooperative games can be solved using GAALOP and discuss the obtained results.  

\section{Cooperative games and Shapley value}
In this section, we will define cooperative games, their subclass of weighted majority games, and the underlying solution concept of the Shapley value according to \cite{Peleg}. Throuhout the paper, we focus solely on cooperative TU-games, where the value function assigns to coalitions a real number with a monetary equivalent, that can be freely distributed in any feasible way. More precisely, a general cooperative TU-game is conventionally defined by a pair $(N,v)$, where $N$ is a set of players and $v:2^N\rightarrow \mathbb{R}$ is the value function, which assigns to each non-empty subset $S\subseteq N,\ S\not=\emptyset$, called coalition, its utility represented by a real number $v(S)$ \cite{Owen}.  Additionally, it is assumed that the so-called empty coalition $\emptyset$ produces no value, i.e. $v(\emptyset)=0$.

 One of the distinguished families of cooperative games are simple games \cite{Isbell}. A simple game $(N,\mathcal{W})$ is defined by a set $\mathcal{W}$, which is a "winning" set of subsets of players' set $N$, i.e. $\mathcal{W}\subset 2^N$. The set $\mathcal{W}$ fulfills three main properties:
\begin{itemize}
    \item $N\in \mathcal{W},$
   \item $\emptyset\not\in \mathcal{W},$
   \item $(S\subseteq T\subseteq N\ \text{ and }\ S\in \mathcal{W})\Rightarrow T\in \mathcal{W}.$
\end{itemize}
Clearly, this game can be represented as a standard cooperative game via simple identification $(N,\mathcal{W})\sim(N,v),$ where  $$v(S)=\begin{cases}1,\ \text{if} \ S\in \mathcal{W}\\
0,\ \text{ otherwise.}
\end{cases}$$
For the purpose of this paper, we work with the class of the weighted majority games \cite{Neyman}, where there exist quota $q$ and weights $w_i$ associated with each player $i\in N$. Then, we can identify weighted majority game $\left(N,q,(w_i)_{i\in N}\right)$ with the simple game $(N,\mathcal{W})$ using the following relation 
$$S\in \mathcal{W}\Leftrightarrow \sum_{i\in S} w_i\geq q.$$
To "solve" a game often means to compute its solution. In terms of cooperative game theory, a solution is defined as a function $\sigma$ that assigns each game $(N,v)$ a subset (or just one) of allocations $\sigma(N,v)$ from the set $X^\ast(N,v)$ 
\begin{equation*}
X^\ast(N,v)=\{(x_i)_{i\in N} |\ \sum_{i\in N}x_i\leq v(N)\},   
\end{equation*}
of all feasible allocations of the $v(N)$, where player's $i$ payoff is denoted as $x_i$ \cite{Peleg}. One of the main one-point solution concepts known in game theory is the Shapley value. The Shapley value for player $i\in N$ can be defined as 
$$\phi_i (N,v)=\sum_{S\subseteq N\setminus \{i\}}\frac{|S|!(|N|-|S|-1)!}{|N|!} (v(S\cup\{i\})-v(S)). $$
The shapley value assigns to each player his or her expected payoff in the following situation. Assume players arrive randomly and each order has an equal probability of $\frac{1}{|N|!}$. Then, when a player arrives, he or she obtains the marginal contribution to the coalition of the already arrived players \cite{Peleg}.

Before the Shapley value concept for simple games is introduced, the property of a player to be pivotal to the coalition has to be explained. In a simple game $(N,\mathcal{W})$, a player $i\in N$ is pivotal to some coalition $S$, iff $S\not\in\mathcal{W}$, but $S\cup\{i\}\in \mathcal{W}$. The analogy of the Shapley value for simple games is called the Shapley-Shubik power index \cite{ShapleyShubik}. It assigns to the player's percentual share of how many times the player is pivotal to a coalition of already arrived players in each sequential coalition (possible ordering of players). This power index reflects the player's power or pretension. Now, two particular instances of the weighted majority games will be presented and solved. 
 \begin{exm} \label{priklad1} Consider weighted majority game $\left(N,q,(w_i)_{i\in N}\right)$, where $N=\{1,2\}$, $w_1=1,w_2=1$, and $q=1$. Table 1 demonstrates players' property of being pivotal (indicated by 1) for each possible order of arrival.
\begin{table}[h!]
\begin{center}
\begin{tabular}{ccc}
Sequential coalition         & Player 1 & Player 2  \\ \hline
\multicolumn{1}{c|}{(1,2)} & 1        & 0            \\
\multicolumn{1}{c|}{(2,1)} & 0        & 1        
\end{tabular}
\caption{Marginal contributions of two players to sequential coalitions.}
\end{center}
\end{table}

Thus, if we denote the Shapley-Shubik power index of player $i$ as $\phi_i$, we have the following results: $\phi_1\left(N,q,(w_i)_{i\in N}\right)=50\%,\ \phi_2\left(N,q,(w_i)_{i\in N}\right)=50\%$. In this game, players obtain the same distribution from the creation of the joint utility, though each of them exceeds the quota on one's own. 
 \end{exm}
 
 \begin{exm} \label{priklad2} Consider weighted majority game $\left(N,q,(w_i)_{i\in N}\right)$, where $N=\{1,2,3\}$, $w_1=1,w_2=2,w_3=1$, and $q=2$. 
 Table 2 demonstrates players' property of being pivotal for each possible order of arrival.
\begin{table}[h!]
\begin{center}
\begin{tabular}{cccc}
Sequential coalition         & Player 1 & Player 2 & Player 3 \\ \hline
\multicolumn{1}{c|}{(1,2,3)} & 0        & 1        & 0        \\
\multicolumn{1}{c|}{(1,3,2)} & 0        & 0        & 1        \\
\multicolumn{1}{c|}{(2,1,3)} & 0        & 1        & 0        \\
\multicolumn{1}{c|}{(2,3,1)} & 0        & 1        & 0        \\
\multicolumn{1}{c|}{(3,1,2)} & 1        & 0        & 0        \\
\multicolumn{1}{c|}{(3,2,1)} & 0        & 1        & 0       
\end{tabular}
\caption{Marginal contributions of three players to sequential coalitions.}
\end{center}
\end{table}

Thus, we have the following results: $$\phi_1\left(N,q,(w_i)_{i\in N}\right)=16.6\overline{6}\%,\phi_2\left(N,q,(w_i)_{i\in N}\right)=66.6\overline{6}\%,\phi_3\left(N,q,(w_i)_{i\in N}\right)=16.6\overline{6}\%.$$ Clearly, the second player dominates since he is pivotal 4 times out of 6 possible. 
 
 \end{exm}
 
However, the results of both games are true only for the ideal deterministic case, when a grand coalition $N$ will always be formed and all players want to cooperate and are able to come to the agreement. On the other side, what if we allow players to form some bonds and pre-agreements or have doubts about cooperation?  Moreover, what if this information will affect the probabilities of coalitions' occurrence and we will distribute players' expected utilities according to the restricted Shapley values? Will it lead to the redistribution of power indices and change the situation? To answer all these questions, we propose to quantize the considered weighted majority games. This will enable us to fairly distribute payoffs on the basis of additional information about players' relationships and intentions. 

\section{Cooperative games quantization}
The main principle of the proposed quantization of cooperative games is the idea that a number of basis states representing $n$--qubit exactly corresponds to a cardinality of power set $2^N$ of players' set $N=\{1,...,n\}$. Thus, we can easily identify each basis state with the particular coalition $S\subseteq N$ in the following way: $$S\sim\ket{a_1...a_i...a_n} \Leftrightarrow a_i=\begin{cases}
1,\ i\in S\\
0,\ i\not\in S
\end{cases}$$

Therefore, it is possible to associate the probability of occurrence of each basis state with the probability of occurrence of a particular coalition. Then, we propose to distribute players' payoffs (power indices) as an expected value of their Shapley values computed for the  value functions restricted on the coalitions $S\subseteq N$ with respect to new quantum probabilities. 

Our main objective is to study how information about players' initial agreement and their satisfaction with this pre-agreement can redistribute the resulting payoffs through this quantum value. In the following subsection, we will introduce new parameters reflecting the additional information about the game setting and demonstrate how they affect the probabilities of the basis states in two-player games.

\subsection{Quantization of two-player game}
At first, we focus on two-player cooperative games to better demonstrate the principles of the quantum cooperative games. Moreover, this simple instance will enable us to show that the proposed approach preserves and extends classical cooperative games. The proposed game quantization scheme is depicted in Figure \ref{fig:rep_pris3}.
\begin{figure}[h!]
  \centering
  \includegraphics[scale=0.9]{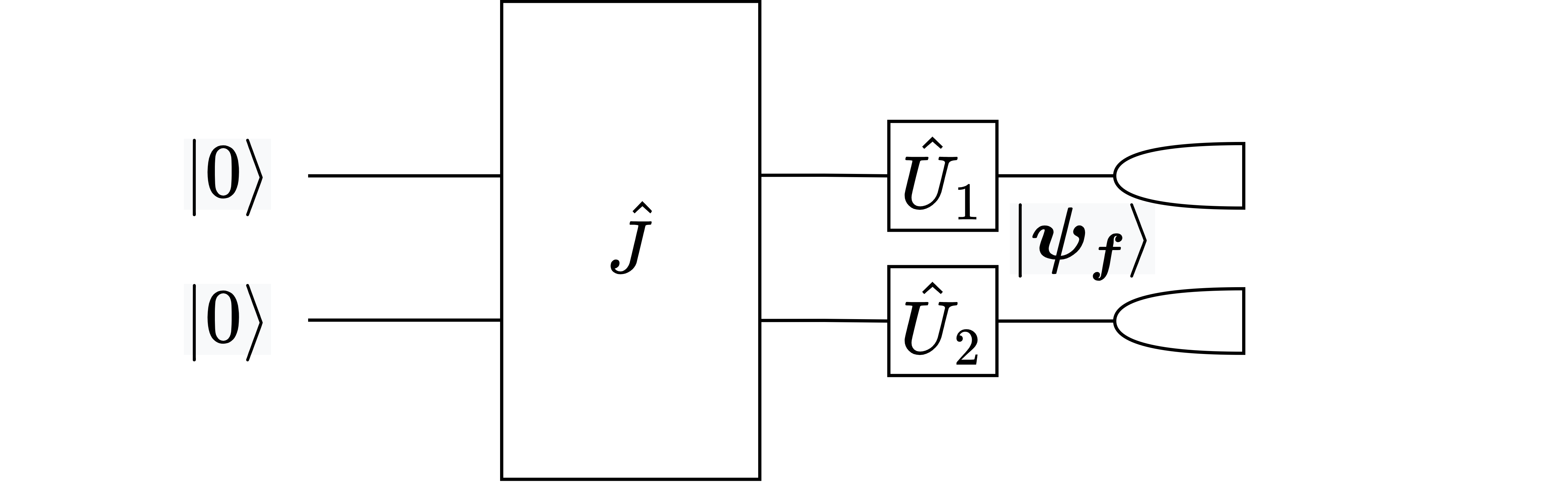}
  \caption{Two-player protocol scheme.}
  \label{fig:rep_pris3}
\end{figure}

The scheme almost fully corresponds to the quantization of EWL protocol \cite{Eisert}, but does not end with the disentanglement operator before the measuring device. The consideration that bonds, created by the entanglement, have to be preserved in order to take them into consideration during the payoff distribution explains the absence of the disentanglement operator. Moreover, for the certain choices of operators $\hat{U}_1(p_1)$ and $\hat{U}_2(p_2)$, entanglement may not play a role in the $\ket{\psi_f}$, if the disentanglement operator $\hat{J}^\dagger$ is applied. 

The presence of each player from $N=\{1,2\}$ in the game is initially represented by the basis state $\ket{0}$. It describes the fact that the player does not cooperate. Thus, the initial 2--qubit state $\ket{00}$ corresponds to the empty coalition $\emptyset$, whereas for the remaining basis states the identification 
$$\ket{01}\sim\{2\},\ \ket{10}\sim\{1\}, \ket{11}\sim\{1,2\},$$
holds. Then, the entanglement \cite{W} operator $\hat{J}$ is applied on the $2$--qubit state $\ket{00}$. In particular, we assume an entanglement operator \begin{equation*}
    \hat{J}(\gamma)= \begin{pmatrix}
\cos{\frac{\gamma}{2}} & 0 & 0 & i\sin{\frac{\gamma}{2}} \\
0 & \cos{\frac{\gamma}{2}} & -i\sin{\frac{\gamma}{2}} & 0 \\
0 & -i\sin{\frac{\gamma}{2}} & \cos{\frac{\gamma}{2}} & 0 \\
i\sin{\frac{\gamma}{2}} & 0 & 0 & \cos{\frac{\gamma}{2}} \\
\end{pmatrix}, 
\end{equation*}
presented in the EWL protocol \cite{Eisert}. The operator  $\hat{J}(\gamma)$ depends on the entanglement measure $\gamma\in [0,\pi/2]$, where $\hat{J}(0)$ is an identity operator and $\hat{J}(\pi/2)$ creates one of maximally entangled two-qubit states $(\ket{00}+\ket{11})/\sqrt{2},$ called the Bell's state. In this work, the entanglement parameter $\gamma$ is interpreted as a measure of the initial agreement between players. When players are maximally entangled, the Bell's state is created and
 probability of occurrence of coalition $N$ is $1/2$ and the same is valid for  $\emptyset$. Thus, the full pre-agreement between players affects the game's possible outcome such that both of them are either together or do not participate in the game at all. 

 After that, the information about players' desire to change the initial state is incorporated into the game using tensor products of unitary operators $\hat{U}_i$. We interpret the operator $\hat{U}_i(p_i)$ as the player's $i\in N$ satisfaction with the outcome after the initial agreement and can be defined as follows $$\hat{U}_i(p_i)=\begin{pmatrix}
\cos(p_i) & \sin(p_i) \\
-\sin(p_i) & \cos(p_i) 
\end{pmatrix},$$ 
where $p_i\in[0,\pi/2], \forall i\in N$. This strategy operator corresponds to the one considered in \cite{Elgazzar}. The greater $p_i$ indicates greater will to intervene in the game process and change the initial state. 
The final state of the game is then 
\begin{equation}
\label{psif}
\ket{\psi_f}=\left(\hat{U}_1(p_1)\otimes \hat{U}_2(p_2)\right)\hat{J}(\gamma)\ket{00}.    
\end{equation}
Thus, a two-player quantum cooperative game associated with the classical cooperative game $(N,v)$, $|N|=2,$ can be defined as $(N,v,\gamma,p_1,p_2)$, where $\gamma\in [0,\pi/2]$ and $p_i\in[0,\pi/2], \forall i\in N$. Now, we can proceed to the definition of the quantum Shapley value. 

\subsubsection*{\textbf{The quantum Shapley value of a two-player quantum cooperative game}}
We define the quantum Shapley value $(\tilde{\phi_i})_{i\in N}$ of the two-player quantum cooperative game $(N,v,\gamma,p_1,p_2)$ as
$$\tilde{\phi}_i(N,v,\gamma,p_1,p_2)=\sum_{S\subseteq N: i\in S}p(S)\phi_i(S,v_S),$$
where $v_S$ is a restriction of initial value function over coalition $S$ and $p(S)$ is a probability of occurrence of coalition $S$ defined as $$p(S)=|\braket{a_1a_2|\psi_f}|^2,$$
where $S\sim\ket{a_1a_2}$ and $\psi_f$ is calculated according to \eqref{psif}.
Table \ref{hodnoty1}, Table \ref{hodnoty2}, and Table \ref{hodnoty3}  demonstrate how different boundary values of the newly introduced parameters will affect the final distribution provided by the quantum Shapley value for player $i=1$. All numbers were rounded to three decimal places. 

\begin{table}[h!]
\begin{center}
\begin{tabular}{c|ccc}
\small
${p_1}$\textbackslash ${\ p_2}$    & 0   & $\pi/4$   & \multicolumn{1}{c}{$\pi/2$ }  \\ \hline
0        & 0   & 0     & 0       \\
$\pi/4$      & 0.5$v(\{1\})$ & 0.25$v(\{1\})$+0.25$\phi_1(N,v)$ & 0.5$\phi_1(N,v)$                   \\
$\pi/2$        & $v(\{1\})$   & 0.5$v(\{1\})$+0.5$\phi_1(N,v)$ & $\phi_1(N,v)$                    
\end{tabular}
\caption{Quantum Shapley value $\tilde{\phi_1}$ for $\gamma=0$.}
\label{hodnoty1}
\end{center}
\end{table}
\begin{table}[h!]
\begin{center}
\begin{tabular}{c|ccc}
\small
${p_1}$\textbackslash ${\ p_2}$    & 0   & $\pi/4$   & \multicolumn{1}{c}{$\pi/2$ }  \\ \hline
0        & 0.146$\phi_1(N,v)$    & 0.073$v(\{1\})$+0.073$\phi_1(N,v)$     & 0.146$\phi_1(N,v)$       \\
$\pi/4$      & 0.427$v(\{1\})$+0.073$\phi_1(N,v)$ & 0.25$v(\{1\})$+0.25$\phi_1(N,v)$ & 0.073$v(\{1\})$+0.427$\phi_1(N,v)$                   \\
$\pi/2$        & 0.854$\phi_1(N,v)$   & 0.427$v(\{1\})$+0.427$\phi_1(N,v)$ & 0.854$\phi_1(N,v)$                    
\end{tabular}
\caption{Quantum Shapley value $\tilde{\phi_1}$ for $\gamma=\pi/4$.}
\label{hodnoty2}
\end{center}
\end{table}
\begin{table}[h!]
\begin{center}
\begin{tabular}{c|ccc}
\small
${p_1}$\textbackslash ${\ p_2}$    & 0   & $\pi/4$   & \multicolumn{1}{c}{$\pi/2$ }  \\ \hline
0        & 0.5$\phi_1(N,v)$   & 0.25$v(\{1\})$+0.25$\phi_1(N,v)$     & 0.5$v(\{1\})$       \\
$\pi/4$      & 0.25$v(\{1\})$+0.25$\phi_1(N,v)$ & 0.25$v(\{1\})$+0.25$\phi_1(N,v)$ & 0.25$v(\{1\})$+0.25$\phi_1(N,v)$                   \\
$\pi/2$        & 0.5$v(\{1\})$    & 0.25$v(\{1\})$+0.25$\phi_1(N,v)$ & 0.5$\phi_1(N,v)$                    
\end{tabular}
\caption{Quantum Shapley value $\tilde{\phi_1}$ for $\gamma=\pi/2$.}
\label{hodnoty3}
\end{center}
\end{table}
As we can see, the proposed approach recreates the original solution $\phi(N,v)$ for the choice $\gamma=0,p_1=p_2=\pi/2$. Thus, it extends the classical Shapley value but also brings other, non-trivial, outcomes. Depending on the definition of the underlying value function $v$, the player might achieve a greater or lesser payoff in the quantum setting compared to the canonical one. For example, in case $v(N)>v(1)+v(2)$, every deviation from the original classical scenario will penalize the player. Thus, the pre-agreement bonds only worsen the position of the player, making the player more vulnerable to a possible betrayal. Alternatively, if $v(N)\leq v(1)+v(2)$ holds, the player is able to gain an advantage and raise greater claims.  In Example \ref{priklad1}, in case $\gamma=\pi/4$, $p_1=\pi/2$, $p_2=\pi/4$ player $i=1$ achieves $\tilde{\phi}_1(N,v,\gamma,p_1,p_2)=64\%$ instead of original ${\phi}_1(N,v)=50\%$.  Now, three-player games will be quantized in the next subsection. 

\subsection{Quantization of three-player game}
Three-player cooperative games can be quantized analogically to the two-player games. The proposed scheme is depicted in Figure 2. \begin{figure}[h!]
  \centering
  \includegraphics[scale=0.7]{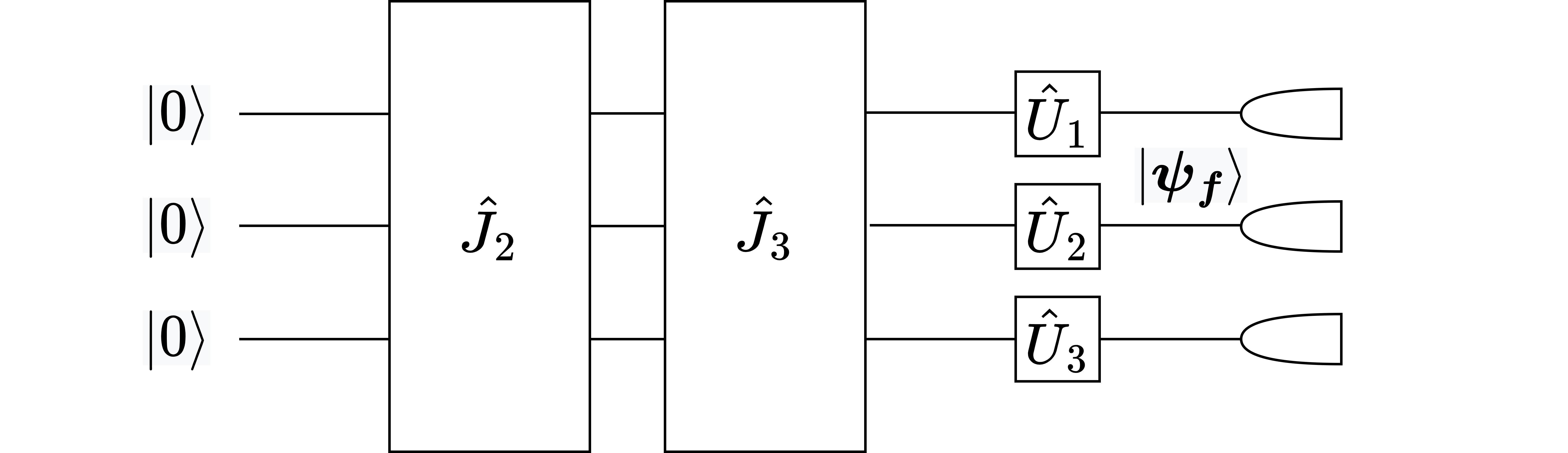}
  \caption{Three-player protocol scheme.}
\end{figure} 

The main novelty, compared to the previous scheme from Figure 1, is that there are two gates describing the entanglement: $\hat{J_2}$ and $\hat{J_3}$. The first entanglement gate creates entanglement between pairs of  qubits and can be represented as  
$$\hat{J_2}(\gamma_{12},\gamma_{13},\gamma_{23})=\left(\text{SWAP}\otimes \text{Id}\right)\left(\text{Id}\otimes \hat{J}(\gamma_{13}) \right)\left(\text{SWAP}\otimes \text{Id}\right) \left( \text{Id}\otimes \hat{J}(\gamma_{23})\right) \left(\hat{J}(\gamma_{12})\otimes  \text{Id}\right),$$
where parameter $\gamma_{ij}\in [0,\pi/2]$ describes entanglement between $i$-th and $j$-th player,
$$\text{Id}=\begin{pmatrix} 1 & 0\\ 0& 1\end{pmatrix}$$ is an identity operator, and 
$$\text{SWAP}=\begin{pmatrix} 1 & 0 & 0 & 0\\ 0 & 0 & 1 & 0\\ 0 & 1 & 0 & 0 \\0 & 0 & 0 & 1\end{pmatrix}$$ interchanges the input states. This entanglement gate can be interpreted as the one that takes into consideration the pre-agreement between pairs of players but does not consider a potential bond that can be created between all three of them at once. Exactly due to this reason we assume a second entanglement gate.

 Gate $\hat{J_3}(\gamma_{123})$ \cite{W} is assumed to create the so-called GHZ state \cite{Green}, when full entanglement between players is considered. 
 To avoid the parametrization of this gate, we consider only discrete possibilities for entanglement measure: $\gamma_{123}\in\{0,1\}$. 
 In case of zero entanglement, $\hat{J_3}(\gamma_{123})$  should act as an identity.  Then, we define $\hat{J_3}(\gamma_{123})$ as follows:
 
$$\hat{J_3}(\gamma_{123})=\begin{cases}
         \text{Id}\otimes \text{Id}\otimes \text{Id}, \text{ for }\gamma_{123}=0,\\
        (\text{Id}\otimes \text{\text{CNOT}}) (\text{\text{CNOT}}\otimes \text{Id}) (\text{H}\otimes \text{Id}\otimes \text{Id}), \text{ for }\gamma_{123}=1,
\end{cases}$$  
 where $$\text{H}=\frac{1}{\sqrt{2}}\begin{pmatrix}
1& 1 \\
1 & -1 
\end{pmatrix}$$
is a Hadamard gate and 
$$\text{\text{CNOT}}=\begin{pmatrix}
1& 0 &  0 & 0 \\
0& 1 & 0 & 0\\
0 & 0 & 0 & 1\\
0 & 0 & 1 & 0
\end{pmatrix}$$ is a controlled NOT gate. 
Thus, $\hat{J_3}(1)$ produces the GHZ state $$(\text{Id}\otimes \text{\text{CNOT}}) (\text{\text{CNOT}}\otimes \text{Id}) (\text{H}\otimes \text{Id}\otimes \text{Id})\ket{000}=\frac{\ket{000}+\ket{111}}{\sqrt{2}}.$$

Then, the final state of the three-player game is given as follows: 
\begin{equation}
\ket{\psi_f}=\left(\hat{U}_1(p_1)\otimes \hat{U}_2(p_2) \otimes \hat{U}_3(p_3)\right)\hat{J_3}(\gamma_{123})\hat{J_2}(\gamma_{12},\gamma_{13},\gamma_{23})\ket{000}. \label{3hraci}\end{equation}

Thus, analogically to a two-player game, a three-player quantum cooperative game associated with the classical cooperative game $(N,v)$, $|N|=3,$ can be defined as $(N,v,\gamma_{123}, \gamma_{12}, \gamma_{13},\gamma_{23} ,p_1,p_2, p_3)$, where $\gamma_{123}\in\{0,1\},$ $\gamma_{i,j}\in [0,\pi/2]$ and $p_i\in[0,\pi/2], \forall i, j\in N$. Then, the corresponding quantum Shapley value is
$$\tilde{\phi}_i(N,v,\gamma_{123}, \gamma_{12}, \gamma_{13},\gamma_{23} ,p_1,p_2, p_3)=\sum_{S\subseteq N: i\in S}p(S)\phi_i(S,v_S),$$  with $p(S)=|\braket{a_1a_2a_3|\psi_f}|^2,$ $S\sim\ket{a_1a_2a_3}$ and $\psi_f$ from \eqref{3hraci}. 

The Shapley value, is symmetric by its axiomatic definition. Therfore, in order to demonstrate that the proposed approach is reasonable, it is necessary to prove that the resulting entangled state $\hat{J_2}(\gamma_{12},\gamma_{13},\gamma_{23})\ket{000}$ does not depend on the ordering of the entanglement gates within $\hat{J_2}(\gamma_{12},\gamma_{13},\gamma_{23})$. However, in the three-player game direct calculations become rather extensive and complex. Therefore, in the next section, we will demonstrate how the formal language of QRA will help us to perform quantum computing and study the properties of the considered entanglement gate.

\section{Automated proofs of circuit equivalence based on quantum register algebra}

In this section, it is demonstrated that the ordering of the entanglements within $\hat{J_2} (\gamma_{12},\gamma_{13},\gamma_{23})$ is insignificant. We also indicate how the QRA apparatus can be used to perform the automated proofs of this and other similar properties. 

\subsection{Quantum Register Algebra}
Consider a  geometric algebra 
$\mathbb G_{2n}$ with the set of basis elements  $\{ e_1, \dots , e_{2n} \}$. This algebra can be seen as a subalgebra of geometric algebra $\mathbb G_{2n+2}$ with the set of basis elements $\{ e_1, \dots , e_{2n}, r_1,r_2 \}$. Then, we define QRA(n) \cite{Hrdina2023} as a geometric algebra $\mathbb G_{2n}$ with the coefficients from $\tilde{\mathbb C} = \{ a+b \iota | a,b \in \mathbb R, \iota = r_1 r_2 \}$, i.e.
$$\mathrm{QRA(n)} = \{ a_1 g_1 + \cdots + a_{2n} g_{2n}  | a_i \in \tilde{\mathbb  C}, g_i \in \mathbb G_{2n} \} .$$

An important part of this construction is the definition of QRA conjugation as a Hermitean-linear anti-automorphism that extends identity on vectors \cite{hrdina2022quantum}, i.e.    
$$(a_1 g_1 + \cdots + a_{2n} g_{2n})^{\dagger} = 
\bar a_1 g_1^{\dagger} + \cdots + \bar a_{2n} g_{2n}^{\dagger}, $$
where $\bar a_i= \overline{a+b \iota } = a-b \iota \in \tilde{\mathbb C}$, 
$ (ab)^{\dagger}=b^{\dagger}a^{\dagger}$ and $e_i^{\dagger}=e_i$. To use QRA to model quantum computing, we choose a different basis. This basis is called Witt basis and is formed by elements  
\begin{align*}
f_i= \frac{1}{2} (e_i + \iota e_{i+n}), \: \: \:
f_i^\dagger =\frac{1}{2} (e_i - \iota e_{i+n}), \: i=1,\dots,n,  \end{align*} 
where the rules for computation with the Witt basis are given as follows:
\begin{align*}
(f_i)^2&=(f_i^{\dagger})^2=0 ,\quad
f_i f_j = -f_j f_i ,  \quad
f_i^{\dagger} f_j^{\dagger} = -f_j^{\dagger} f_i^{\dagger},   \\
f_i f_i^{\dagger} f_i&=f_i,   \qquad
f_i^{\dagger} f_i f_i^{\dagger}=f_i^{\dagger}, \qquad f_i^{\dagger} f_j = - f_j f_i^{\dagger}.
\end{align*}

There is the following straightforward identification  of bra and ket vectors of Dirac formalism with elements of QRA:
\begin{align*} 
\langle a_1 \dots a_n| \longleftrightarrow 
I(f_n)^{a_n}
\dots (f_1)^{a_1}, \text{ where }a_i \in \{ 0,1\},\\
| a_1 \dots a_n \rangle \longleftrightarrow 
(f_1^{\dagger})^{a_1} 
\dots (f_n^{\dagger})^{a_n} I, \text{ where }a_i \in \{ 0,1\},
\end{align*}
where  $I=f_1f_1^{\dagger} \cdots f_nf_n^{\dagger}$. To describe a three-player game, the space of $3$--qubit states should be considered, i.e. we will work with the identification 
\begin{align*}
    | 000 \rangle & \longleftrightarrow  (f_1^{\dagger})^{0} (f_2^{\dagger})^{0} (f_3^{\dagger})^{0} I= I, \\
    | 001 \rangle  &\longleftrightarrow (f_1^{\dagger})^{0} (f_2^{\dagger})^{0} (f_3^{\dagger})^{1} I = f_3^{\dagger}I,
    \\
    | 010 \rangle & \longleftrightarrow   (f_1^{\dagger})^{0} (f_2^{\dagger})^{1} (f_3^{\dagger})^{0} I=  f_2^{\dagger}I, 
    \\
    | 011 \rangle  &\longleftrightarrow (f_1^{\dagger})^{0} (f_2^{\dagger})^{1} (f_3^{\dagger})^{1}I = f_2^{\dagger} f_3^{\dagger} I, 
    \\
    | 100 \rangle & \longleftrightarrow   (f_1^{\dagger})^{1} (f_2^{\dagger})^{0} (f_3^{\dagger})^{0} I=   f_1^{\dagger} I,    \\
    | 101 \rangle & \longleftrightarrow  (f_1^{\dagger})^{1} (f_2^{\dagger})^{0} (f_3^{\dagger})^{1}I =  f_1^{\dagger} f_3^{\dagger} I,
    \\
    | 110 \rangle & \longleftrightarrow  (f_1^{\dagger})^{1} (f_2^{\dagger})^{1} (f_3^{\dagger})^{0} I= f_1^{\dagger} f_2^{\dagger} I, \\  
    | 111 \rangle  & \longleftrightarrow  (f_1^{\dagger})^{1} (f_2^{\dagger})^{1}(f_3^{\dagger})^{1} I = f_1^{\dagger} f_2^{\dagger} f_3^{\dagger} I.
\end{align*}
The space of $3$--qubit bra vectors can be also described by QRA conjugation as $ \langle a_1a_2a_3 | =  | a_1a_2a_3 \rangle^{\dagger} $, for example
$$ \langle 111 |= | 111 \rangle^{\dagger}  \longleftrightarrow   (f_1^{\dagger} f_2^{\dagger} f_3^{\dagger})^{\dagger} I = f_3 f_2 f_1 I. $$ The analogical identification on two-qubit states has been already presented in \cite{Eryganov}. 

We conclude this subsection by answering the question about how a circuit can be composed of individual blocks.  According to \cite{Hrdina2023}, a serial circuit, formed by sequential application of the gates $f_A$ and $f_B$ on the same qubit, can be represented in QRA as $f_Bf_A$. A parallel circuit \cite{Hrdina2023}, consisting of the gates $f_A$ and $f_B$, acting on different qubits, can be represented as $f_Af_B$ up to a sign of individual monomials. When working with the two gates in a parallel circuit (in fact, their tensor product), it is sufficient to perform the following procedure, that has been originally presented in \cite{Hrdina2023}. 
\begin{enumerate}
\item On the right side of multiplication, we assign artificial coefficient $b$ to the monomials with the odd number of
terms. 
\item On the left side of multiplication,  $a$ is assigned to monomials with the odd number of occurrences of elements of type $f_i^\dagger f_i$ or $f_i$. 
\item Then, after the multiplication, we perform simple reassignment:
$ab\rightarrow-1,\ a,b\rightarrow1.$
\end{enumerate}
Since multiplication in QRA is an associative operation, it is sufficient to apply the above-defined rule subsequently on pairs of quantum gates. This technical step is necessary due to the nature of the problem. On the other hand, it allows us for rather straightforward implementation. Examples of the serial and parallel circuits constructed via QRA can be found in \cite{Hrdina2023}.

\subsection{Circuit identities}
At first, to highlight the convenience of QRA notation, we use this algebra to find general representatives of $\text{SWAP}(s,t)$ gates, describing the interchange of qubits $s$ and $t$. At the end of this subsection, we apply QRA and GAALOP to perform the automated proof of the irrelevance of the entanglement gates ordering within $\hat{J_2}(\gamma_{12},\gamma_{13},\gamma_{23})$. 
\begin{lem} \label{l:sw12}
Let $\ket{\psi}$ be an $n$--qubit and $1\leq s < n$, $s \in \mathbb Z$. Then, the element 
\begin{align} 
\label{swap12}
\text{SWAP}(s,s+1)=f_s  f_s^{\dagger} f_{s+1}  f_{s+1}^{\dagger} - f_{s}f_{s+1}^{\dagger}+ f_{s}^{\dagger}f_{s+1} + f_{s}^{\dagger} f_{s} f_{s+1}^{\dagger}f_{s+1}  
\end{align}
 acts on $\ket{\psi}$ as a $\text{SWAP}$ between qubits $s$ and ${s+1}$.  
\end{lem}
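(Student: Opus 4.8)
The plan is to verify directly that the element in \eqref{swap12} sends each computational basis ket to the ket with qubits $s$ and $s+1$ interchanged. Since a SWAP acts non-trivially only on the two qubits involved, I would first reduce the $n$--qubit statement to a purely local two-qubit computation on positions $s$ and $s+1$, and then check the four cases $(a_s,a_{s+1})\in\{(0,0),(0,1),(1,0),(1,1)\}$ using the Witt-basis relations. The four summands in \eqref{swap12} already telegraph the outcome: the terms $f_s f_s^{\dagger} f_{s+1} f_{s+1}^{\dagger}$ and $f_s^{\dagger} f_s f_{s+1}^{\dagger} f_{s+1}$ are the projectors onto the local states $\ket{00}$ and $\ket{11}$, which are fixed by SWAP, whereas $-f_s f_{s+1}^{\dagger}$ and $f_s^{\dagger} f_{s+1}$ are \emph{hopping} terms that should carry $\ket{10}\mapsto\ket{01}$ and $\ket{01}\mapsto\ket{10}$ respectively.

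For the reduction, I would exploit that every summand in \eqref{swap12} has even degree in the generators $f_s,f_s^{\dagger},f_{s+1},f_{s+1}^{\dagger}$ and involves no index outside $\{s,s+1\}$. Since any single Witt generator anticommutes with $f_j^{\dagger}$ for a different index $j$, an even product commutes \emph{without a sign} with each such $f_j^{\dagger}$ and with each even-degree block $f_j f_j^{\dagger}$. Writing a general ket as a prefix $(f_1^{\dagger})^{a_1}\cdots(f_{s-1}^{\dagger})^{a_{s-1}}$, a local part, a suffix, and the factor $I=f_1 f_1^{\dagger}\cdots f_n f_n^{\dagger}$ (whose blocks mutually commute), this observation lets me pull $\text{SWAP}(s,s+1)$ through all spectator factors sign-free, so that it suffices to prove it acts correctly on the local kets $(f_s^{\dagger})^{a_s}(f_{s+1}^{\dagger})^{a_{s+1}} f_s f_s^{\dagger} f_{s+1} f_{s+1}^{\dagger}$.

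The remaining step is the four-case check, and the \textbf{main obstacle is the sign bookkeeping} in the two hopping terms. Using $(f_i)^2=(f_i^{\dagger})^2=0$, the relations $f_i f_i^{\dagger} f_i=f_i$ and $f_i^{\dagger} f_i f_i^{\dagger}=f_i^{\dagger}$, and the anticommutators $f_i^{\dagger} f_j=-f_j f_i^{\dagger}$, $f_i f_j=-f_j f_i$, I would evaluate each of the four summands on each local ket: on the two projector cases only the matching projector survives while every other summand annihilates the ket through a repeated $f_i$ or $f_i^{\dagger}$, and on the two hopping cases a single generator must be commuted past one creation operator, producing exactly one sign. It is precisely this sign that the minus in front of $f_s f_{s+1}^{\dagger}$ and the plus in front of $f_s^{\dagger} f_{s+1}$ are designed to absorb, so that $\ket{10}$ and $\ket{01}$ are exchanged with coefficient $+1$ rather than $-1$. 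Confirming that these signs come out correctly is the crux of the argument; once all four local cases are settled, the reduction of the previous paragraph promotes them to the full $n$--qubit identity.
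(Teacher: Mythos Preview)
Your proposal is correct and follows essentially the same route as the paper: identify the two projector summands as local identities on $\ket{00}$ and $\ket{11}$ and the two hopping summands as the transpositions $\ket{10}\leftrightarrow\ket{01}$, use the even degree of each summand to commute $\text{SWAP}(s,s+1)$ past all spectator factors $(f_j^{\dagger})^{a_j}$ with $j\notin\{s,s+1\}$, and then verify the local two-qubit action. The paper's proof is terser about the final four-case check, but the structure and the key observations are identical.
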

\begin{proof}
Let us note that the elements 
$f_s  f_s^{\dagger} f_{s+1}  f_{s+1}^{\dagger}$
and $f_{s}^{\dagger} f_{s} f_{s+1}^{\dagger}f_{s+1}$ act as identities, if they act nontrivially (result is not zero). The 
elements $f_{s}f_{s+1}^{\dagger}$ and $f_{s}^{\dagger}f_{s+1} $ interchange two adjacent elements. We will use this property frequently throughout this section.
The following direct computations are based on the fact that all elements of \eqref{swap12} have an even number of monomials. \begin{align*}
& 
\text{SWAP}(s,s+1) (f_1^{\dagger})^{a_1} 
\cdots (f_{n}^{\dagger})^{a_n}I
= 
(f_1^{\dagger})^{a_1} 
\cdots (f_{s-1}^{\dagger})^{a_{s-1}}
\text{SWAP}(s,s+1)
(f_s^{\dagger})^{a_s} 
\cdots (f_{n}^{\dagger})^{a_n} I\\
& = 
(f_1^{\dagger})^{a_1} 
\cdots (f_{s-1}^{\dagger})^{a_{s-1}}
\Big [ \text{SWAP}(s,s+1)
(f_s^{\dagger})^{a_s}(f_{s+1}^{\dagger})^{a_{s+1}} 
\Big ]
(f_{s+2}^{\dagger})^{a_{s+2}} 
\cdots (f_{n}^{\dagger})^{a_{n}}I
\end{align*}
and  $\text{SWAP}(s,s+1)$ acts on $(f_s^{\dagger})^{a_s}(f_{s+1}^{\dagger})^{a_{s+1}}$  as $\text{SWAP}$ which completes the proof.
\end{proof}

\begin{thm} 
Let $\ket{\psi}$ be an $n$--qubit. Then, the element 
\begin{align}
\begin{split} \label{swap}
\text{SWAP}(s,t)&= 
(f_s  f_s^{\dagger} f_t  f_t^{\dagger}- f_s^{\dagger}f_t- f_sf_t^{\dagger}- f_s^{\dagger}f_sf_t^{\dagger}f_t ) \\
&(\sum_{ \sum (a_i) \text{ is odd } }  (f_{s+1}^{\dagger}f_{s+1})^{a_{s+1}} (f_{s+1} f_{s+1}^{\dagger})^{b_{s+1}} \cdots (f_{t-1}^{\dagger}f_{t-1})^{a_{t-1}} (f_{t-1} f_{t-1}^{\dagger})^{b_{t-1}})
\\
+&
(f_s  f_s^{\dagger} f_t  f_t^{\dagger} + f_s^{\dagger}f_t- f_s f_t^{\dagger}+ f_s^{\dagger}f_sf_t^{\dagger}f_t  ) \\
&(\sum_{ \sum (a_i) \text{ is even} }  (f_{s+1}^{\dagger}f_{s+1})^{a_{s+1}} (f_{s+1} f_{s+1}^{\dagger})^{b_{s+1}} \cdots 
(f_{t-1}^{\dagger}f_{t-1})^{a_{t-1}} (f_{t-1} f_{t-1}^{\dagger})^{b_{t-1}})
\end{split}
\end{align}
acts as a SWAP gate between $s^{\text{th}}$ and $t^{\text{th}}$ qubit ($s<t$).  \end{thm}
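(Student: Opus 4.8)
The plan is to verify directly that the right-hand side of \eqref{swap} sends an arbitrary computational basis ket to the ket with the occupations of qubits $s$ and $t$ interchanged, in the same spirit as the proof of Lemma~\ref{l:sw12}. First I would observe that the factors $(f_i^{\dagger}f_i)^{a_i}(f_i f_i^{\dagger})^{b_i}$ occurring in the two sums are the projectors onto the states $\ket{1}$ and $\ket{0}$ of the $i$-th qubit (this is immediate from $f_if_i^{\dagger}f_i=f_i$, $f_i^{\dagger}f_if_i^{\dagger}=f_i^{\dagger}$ and $(f_i)^2=(f_i^{\dagger})^2=0$), so that the inner sum over $\sum a_i$ odd and the one over $\sum a_i$ even are exactly the projectors $\Pi_{\mathrm{odd}}$ and $\Pi_{\mathrm{even}}$ onto the subspaces in which the intermediate qubits $s+1,\dots,t-1$ carry an odd, respectively even, number of ones. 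Acting on a basis ket whose intermediate occupations have parity $P$, only one of the two summands of \eqref{swap} survives, the corresponding intermediate projector acts as the identity on those qubits, and the problem is reduced to the action of the left-hand prefactor on the pair $s,t$.

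Next I would make the sign bookkeeping explicit. Since $f_s,f_s^{\dagger},f_t,f_t^{\dagger}$ anticommute with each intermediate generator through $f_i^{\dagger}f_j=-f_jf_i^{\dagger}$, bringing a single such endpoint operator into its canonical position past the intermediate creation operators $(f_{s+1}^{\dagger})^{a_{s+1}}\cdots(f_{t-1}^{\dagger})^{a_{t-1}}$ produces a factor $(-1)^{P}$. The two prefactors in \eqref{swap} differ precisely in the signs of the off-diagonal terms $f_s^{\dagger}f_t$, $f_sf_t^{\dagger}$ and of the number term $f_s^{\dagger}f_sf_t^{\dagger}f_t$, and the essential point to be checked is that this parity-dependent choice of signs is consistent with the $(-1)^{P}$ coming from the anticommutations, so that across both parity sectors the element \eqref{swap} realizes one and the same SWAP action; in this sense the two inner sums function as a Jordan--Wigner parity string attached to the endpoints $s$ and $t$.

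The computational core is then a case analysis over $(a_s,a_t)\in\{0,1\}^2$. For $(0,0)$ and $(1,1)$ the occupations of $s$ and $t$ are preserved and only the diagonal terms $f_sf_s^{\dagger}f_tf_t^{\dagger}$ and $f_s^{\dagger}f_sf_t^{\dagger}f_t$ survive, acting as the identity on these qubits; for $(1,0)$ and $(0,1)$ exactly one off-diagonal term, $f_sf_t^{\dagger}$ or $f_s^{\dagger}f_t$, survives, annihilating the occupied endpoint and creating the other. In each case I would check, using the Witt relations, that the surviving monomial returns the ket with $a_s$ and $a_t$ interchanged and with the sign prescribed by \eqref{swap}. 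A useful consistency check is that setting $t=s+1$ empties the intermediate sums, forces $P=0$ and selects the even prefactor, recovering exactly the element \eqref{swap12}, so that the theorem specializes to Lemma~\ref{l:sw12}.

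I expect the sign bookkeeping to be the main obstacle: tracking the $(-1)^P$ generated by passing the endpoint operators through the intermediate string, and confirming that it is matched by the parity-dependent signs of the two prefactors for every configuration, is exactly the kind of step that is easy to get wrong by hand. A more structural alternative sidesteps the closed form: from the symmetric-group identity $(s,t)=(s,s{+}1)(s{+}1,t)(s,s{+}1)$ one obtains $\text{SWAP}(s,t)=\text{SWAP}(s,s{+}1)\,\text{SWAP}(s{+}1,t)\,\text{SWAP}(s,s{+}1)$, so an induction on $t-s$ with base case Lemma~\ref{l:sw12} shows that this composition acts as a SWAP, leaving only the algebraic task of expanding the product with the QRA multiplication rules and checking that it collapses to \eqref{swap}. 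Either way, the remaining verifications are finite and mechanical, precisely the type of computation the QRA/GAALOP apparatus of this section is designed to discharge automatically.
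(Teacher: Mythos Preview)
Your approach is essentially the same as the paper's: both identify the inner sums as parity projectors onto the intermediate qubits, trace the $(-1)^{P}$ coming from anticommuting the endpoint operators past the intermediate creation string, and then reduce to the adjacent case of Lemma~\ref{l:sw12}; the paper merely adds the preliminary cosmetic step of specializing to $s=1$, $t=n$. One small slip to fix when you write it out: the two prefactors in \eqref{swap} do \emph{not} differ in the sign of $f_sf_t^{\dagger}$ (it is $-f_sf_t^{\dagger}$ in both), only in the signs of $f_s^{\dagger}f_t$ and $f_s^{\dagger}f_sf_t^{\dagger}f_t$, which is exactly what the parity bookkeeping predicts once you note that in the $(0,1)$ and $(1,1)$ cases the surviving monomial must push an $f_t$-type operator across the intermediate block while in the $(1,0)$ case it need not.
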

\begin{proof}
Because each part of the expression \eqref{swap} has an even number of elements, it is easy to show that 
$$\text{SWAP}(s,t) ( f_1^{\dagger})^{a_1} \cdots  (f_n^{\dagger})^{a_n} I =
( f_1^{\dagger})^{a_1} \cdots  (f_{s-1}^{\dagger})^{a_{s-1}}
\text{SWAP}(s,t) ( f_s^{\dagger})^{a_s} \cdots  (f_n^{\dagger})^{a_n}I 
$$
and, because of associativity, we have an expression
$$
\text{SWAP}(s,t) ( f_s^{\dagger})^{a_s} \cdots  (f_n^{\dagger})^{a_n} I =
[\text{SWAP}(s,t) ( f_s^{\dagger})^{a_s} \cdots  (f_t^{\dagger})^{a_t}]( f_{t+1}^{\dagger})^{a_{t+1}} \cdots  (f_n^{\dagger})^{a_n} I.
$$
Thus, without loss of generality, we can only discuss the gate 
\begin{align}
\begin{split} \label{s1n}
\text{SWAP}(1,n)&=
(f_1  f_1^{\dagger} f_n  f_n^{\dagger} - f_1^{\dagger}f_n-f_1f_n^{\dagger} - f_1^{\dagger}f_1f_n^{\dagger}f_n  ) \\
&(\sum_{ \sum (a_i) \text{ is odd } }  (f_2^{\dagger}f_2)^{a_2} (f_2 f_2^{\dagger})^{b_2} \cdots 
(f_{n-1}^{\dagger}f_{n-1})^{a_{n-1}} (f_{n-1} f_{n-1}^{\dagger})^{b_{n-1}})
\\
+&
(f_1  f_1^{\dagger} f_n  f_n^{\dagger} + f_1^{\dagger}f_n-f_1f_n^{\dagger} + f_1^{\dagger}f_1f_n^{\dagger}f_n  ) \\
&(\sum_{\sum (a_i) \text{ is even} }  (f_2^{\dagger}f_2)^{a_2} (f_2 f_2^{\dagger})^{b_2} \cdots 
(f_{n-1}^{\dagger}f_{n-1})^{a_{n-1}} (f_{n-1} f_{n-1}^{\dagger})^{b_{n-1}})
\end{split}
\end{align} 
Let $\ket{\psi}$ be an $n$--qubit, if $a_i=b_i =1$  then  $f_i^{\dagger} f_i f_i f_{i}^{\dagger} =0$. Therefore, in the expressions \eqref{swap} and  \eqref{s1n}, there are only such elements that $a_i+b_i \in \{0,1\}$. The gate  
$$(f_2^{\dagger}f_2)^{a_2} (f_2 f_2^{\dagger})^{b_2} \cdots 
(f_{n-1}^{\dagger}f_{n-1})^{a_{n-1}} (f_{n-1} f_{n-1}^{\dagger})^{b_{n-1}}$$
acts as the projection to the state 
\begin{align*}
&\sum_{a_1,a_n \in \{0,1\}} \psi_{a_1\dots a_n} (f_1^{\dagger})^{a_1} ( f_2^{\dagger})^{a_2} \cdots  (f_{n-1}^{\dagger})^{a_{n-1}}(f_n^{\dagger})^{a_n}I \\
&=
\psi_{0a_2\dots a_{n-1}0}  ( f_2^{\dagger})^{a_2} \cdots  (f_{n-1}^{\dagger})^{a_{n-1}}I+\psi_{0a_2\dots a_{n-1}1} ( f_2^{\dagger})^{a_2} \cdots  (f_{n-1}^{\dagger})^{a_{n-1}}f_n^{\dagger}I\\
&+\psi_{1a_2\dots a_{n-1}0} f_1^{\dagger} ( f_2^{\dagger})^{a_2} \cdots  (f_{n-1}^{\dagger})^{a_{n-1}}I+\psi_{1a_2\dots a_{n-1}1} f_1^{\dagger} ( f_2^{\dagger})^{a_2} \cdots  (f_{n-1}^{\dagger})^{a_{n-1}}f_n^{\dagger}I\\
&=\psi_{0a_2\dots a_{n-1}0}  ( f_2^{\dagger})^{a_2} \cdots  (f_{n-1}^{\dagger})^{a_{n-1}}I+
 (-1)^{\sum a_i}\psi_{0a_2\dots a_{n-1}1} f_n^{\dagger} ( f_2^{\dagger})^{a_2} \cdots  (f_{n-1}^{\dagger})^{a_{n-1}}I\\
&+\psi_{1a_2\dots a_{n-1}0} f_1^{\dagger} ( f_2^{\dagger})^{a_2} \cdots  (f_{n-1}^{\dagger})^{a_{n-1}}I+
(-1)^{\sum a_i}\psi_{1a_2\dots a_{n-1}1} f_1^{\dagger} f_n^\dagger( f_2^{\dagger})^{a_2} \cdots  (f_{n-1}^{\dagger})^{a_{n-1}}I
\end{align*}
Thus, if $\sum a_i$ is odd, the middle elements of the SWAP gate must have a different sign with respect to the corresponding projections. Then, because of Lemma \ref{l:sw12}, 
the element acts as a SWAP between the first and $n^{\text{th}}$ qubit which completes the proof.
\end{proof}

Finally, with the help of GAALOP \cite{alves20mathematica,hil3}, we demonstrate that the order of the entanglement operators within $\hat{J_2}(\gamma_{12},\gamma_{13},\gamma_{23})$ does not affect the outcome of the game. The following code

\begin{lstlisting}[caption={Code for the automated proof of the entanglement symmetry.}]
i = er1 * er2 ;
f1 = 0.5*( e1 + i * e4 );
f1T = 0.5*( e1 - i * e4 );
f2 = 0.5*( e2 + i * e5 );
f2T = 0.5*( e2 - i * e5 );
f3 = 0.5*( e3 + i * e6 );
f3T = 0.5*( e3 - i * e6 );
I = f1 * f1T * f2 * f2T* f3 * f3T ;
psi=I;
J12 = cos(gamma12/2)*(f1*f1T*f2*f2T+f1*f1T*f2T*f2+f1T*f1*f2*f2T
+f1T*f1*f2T*f2) +i*sin(gamma12/2)*(-f1*f2+f1*f2T-f1T*f2+f1T*f2T);
Id3 = f3*f3T + f3T*f3;
J23 = cos(gamma23/2)(f2*f2T*f3*f3T + f2*f2T*f3T*f3+f2T*f2*f3*f3T
+f2T*f2*f3T*f3) +i*sin(gamma23/2)*(-f2*f3+f2*f3T-f2T*f3+f2T*f3T);
J13 = cos(gamma13/2)(f2*f2T*f3*f3T + f2*f2T*f3T*f3+f2T*f2*f3*f3T
+f2T*f2*f3T*f3) +i*sin(gamma23/2)*(-f2*f3+f2*f3T-f2T*f3+f2T*f3T);
Id1 = f1*f1T + f1T*f1;
SWAP12 =( f1 * f1T * f2 * f2T )+( f1T * f2 ) 
-( f1 * f2T )+( f1T * f1 * f2T * f2 );
J1= J12 * Id3;
J2= Id1 * J23;
J33=Id1*J13;
J3= SWAP12 * J33 *SWAP12;
S1=J1*J2*J3;
S2=J2*J3*J1;
S3=J2*J3*J1;
?X1=S1-S2;
?X2=S1-S3;
?X3=S2-S3;
\end{lstlisting}
has the output 
\begin{lstlisting}[caption={Output for the automated proof of entanglement symmetry.}]
function [X1, X2, X3] = script()
end
\end{lstlisting}
which proves that, under every possible ordering of entanglement gates, $\hat{J_2}(\gamma_{12},\gamma_{13},\gamma_{23})$ is represented by the same element of QRA. Thus, the implementation of QRA in GAALOP can serve as an instrument to check the equivalence of the circuits. It can be seen as a viable alternative to symbolic calculations in other available languages. Moreover, the found QRA representation of SWAP$(s,t)$ can be used to generalize the proposed approach into the domain of $n$-player quantum cooperative games in the future. In the next section, we will demonstrate another possible application of QRA to quantum cooperative game theory.

\section{The QRA representation of two- and three-player quantum cooperative games}

In this section, we establish the parametric expressions describing the quantum Shapley values of the two- and three-player quantum cooperative games using QRA. The detailed deduction of QRA representations of $1$-- and $2$--qubit gates can be found in \cite{hrdina2022quantum}.

\subsection{Two-player game quantum Shapley value}
The considered scheme is analogous to the two-player quantum non-cooperative game (except for the disentanglement operator) presented in \cite{Eisert}. Therefore, using the considerations established in \cite{Eryganov}, we can obtain the final $2$--qubit state 
\begin{equation}
\label{psif2}
\begin{aligned}
\ket{\psi_f}=\left(\hat{U}_1(p_1)\otimes \hat{U}_2(p_2)\right) \hat{J}(\gamma)\ket{00}&=\cos{\frac{\gamma}{2}}(\cos(p_1)\cos(p_2)f_1f_1^\dagger f_2f_2^\dagger\\
&-\cos(p_1)\sin(p_2)f_1f_1^\dagger f_2^\dagger-\sin(p_1)\cos(p_2)f_1^\dagger f_2f_2^\dagger\\
&+\sin(p_1)\sin(p_2)f_1^\dagger f_2^\dagger)+i\sin{\frac{\gamma}{2}}(\sin(p_1)\sin(p_2)f_1f_1^\dagger f_2 f_2^\dagger\\
&+\sin(p_1)\cos(p_2)f_1f_1^\dagger f_2^\dagger+\cos(p_1)\sin(p_2) f_1^\dagger f_2f_2^\dagger\\
&+\cos(p_1)\cos(p_2) f_1^\dagger f_2^\dagger).
\end{aligned}
\end{equation}
Thus, the quantum Shapley value of the two-player game can be represented as 
\begin{equation*}
\begin{aligned}
\tilde{\phi}_1(N,v,\gamma,p_1,p_2)&=(\cos^2{\frac{\gamma}{2}}\sin^2(p_1) \cos^2(p_2)+\sin^2{\frac{\gamma}{2}} \cos^2(p_1)\sin^2(p_2))\phi_i(\{1\},v_{\{1\}})\\
&+(\cos^2{\frac{\gamma}{2}}\sin^2(p_1)\sin^2(p_2)+\sin^2{\frac{\gamma}{2}} \cos^2(p_1) \cos^2(p_2))\phi_i(N,v).\\
\end{aligned}
\end{equation*}

\begin{equation*}
\begin{aligned}
\tilde{\phi}_2(N,v,\gamma,p_1,p_2)&=(\cos^2{\frac{\gamma}{2}} \cos^2(p_1)\sin^2(p_2)+\sin^2{\frac{\gamma}{2}}\sin^2(p_1) \cos^2(p_2))\phi_i(\{2\},v_{\{2\}})\\
&+(\cos^2{\frac{\gamma}{2}}\sin^2(p_1)\sin^2(p_2)+\sin^2{\frac{\gamma}{2}} \cos^2(p_1) \cos^2(p_2))\phi_i(N,v).\\
\end{aligned}
\end{equation*}
Then, according to the definition of Shapley value, we have $$\phi_i(\{i\},v_{\{i\}})=v(\{i\}),\ \phi_i(N,v)=\frac{v(\{i\})}{2}+\frac{v(N)-v(N\setminus\{i\})}{2}.$$
Thus, we can obtain the following expression: 
\begin{equation}
\label{shapley2}
\begin{aligned}
\tilde{\phi}_i(N,v)&=(\cos^2{\frac{\gamma}{2}}\sin^2(p_i)\cos^2(p_{N\setminus\{i\}})+\sin^2{\frac{\gamma}{2}}\cos^2(p_i)\sin^2(p_{N\setminus\{i\}})v(\{i\})\\
&+(\cos^2{\frac{\gamma}{2}}\sin^2(p_1)\sin^2(p_2)+\sin^2{\frac{\gamma}{2}} \cos^2(p_1) \cos^2(p_2))(\frac{v(\{i\})}{2}+\frac{v(N)-v(N\setminus\{i\})}{2}).
\end{aligned}
\end{equation}
It is easy to verify, that the obtained expression is
in full accordance with the results presented in Tables \ref{hodnoty1}, \ref{hodnoty2}, and \ref{hodnoty3}. Now, the QRA representation of the three-player quantum cooperative game will be described in detail. 
 

 \subsection{Three-player game quantum Shapley value}
 Three-player game starts with the qubit
 $$\ket{000}=f_1f_1^\dagger f_2f_2^\dagger f_3f_3^\dagger.$$ Then, the first entanglement gate $\hat{J_2}(\gamma_{12},\gamma_{13},\gamma_{23})$ is applied. The gate $\hat{J_2}(\gamma_{12},\gamma_{13},\gamma_{23})$ represents a series of gates $(\text{SWAP}\otimes \text{Id})\left(\text{Id}\otimes\hat{J}(\gamma_{13})\right)(\text{SWAP}\otimes \text{Id}) \left( \text{Id}\otimes \hat{J}(\gamma_{23})\right) \left(\hat{J}(\gamma_{12})\otimes  \text{Id}\right)$, with each of them being tensor product of at least two gates. Further, we will use the notation  
 $$\left(\text{SWAP}(1,2)\otimes \text{Id}(3)\right)\left(\text{Id}(1)\otimes\hat{J}(\gamma_{13})\right)\left(\text{SWAP}(1,2)\otimes \text{Id}(3)\right) \left( \text{Id}(1)\otimes \hat{J}(\gamma_{23})\right) \left(\hat{J}(\gamma_{12})\otimes  \text{Id}(3)\right)$$
 to prevent possible ambiguity and specify on which qubits the gates are applied.
 The first gate in the series is 
$ \left(\hat{J}(\gamma_{12})\otimes  \text{Id}(3)\right)$, where
$$\text{\text{Id}}(3)=f_3f_3^\dagger+f_3^\dagger f_3.$$ This is a tensor product of one $2$--qubit gate and one $1$--qubit gate. However, when the identity operator is on the right side of the tensor product, no change in sign can occur and it is sufficient to directly rewrite such gate as a multiplication.
Thus, we directly obtain the expression
\begin{equation*}
\begin{aligned}
\hat{J}(\gamma_{12})\otimes  \text{Id}&= \cos{\frac{\gamma_{12}}{2}}(f_1 f_1^\dagger f_2 f_2 ^\dagger +f_1^\dagger f_1 f_2 f_2^\dagger+ f_1 f_1^\dagger f_2^\dagger f_2 +f_1^\dagger f_1f_2^\dagger f_2)(f_3f_3^\dagger+f_3^\dagger f_3)\\
&+i\sin{\frac{\gamma_{12}}{2}}(-f_1f_2+f_1f_2^\dagger-f_1^\dagger f_2+f_1^\dagger f_2^\dagger)(f_3f_3^\dagger+f_3^\dagger f_3).
\end{aligned}
\end{equation*}
The next gate is $ \text{Id}(1)\otimes \hat{J}(\gamma_{23})$. However, the gate $\hat{J}(\gamma_{23})$ cannot affect signs of monomials. Thus, the following representation can be obtained: 
 \begin{equation*}
\begin{aligned}
 \text{Id(1)}\otimes \hat{J}(\gamma_{23})&= \cos{\frac{\gamma_{23}}{2}}(f_1f_1^\dagger+f_1^\dagger f_1)(f_2 f_2^\dagger f_3 f_3 ^\dagger +f_2^\dagger f_2 f_3 f_3^\dagger+ f_2 f_2^\dagger f_3^\dagger f_3 +f_2^\dagger f_2f_3^\dagger f_3)\\
&+i\sin{\frac{\gamma_{23}}{2}}(f_1f_1^\dagger+f_1^\dagger f_1)(-f_2f_3+f_2f_3^\dagger-f_2^\dagger f_3+f_2^\dagger f_3^\dagger).
\end{aligned}
\end{equation*}
Then, the input states have to be interchanged using $\text{SWAP}(1,2)\otimes \text{Id}(3)$ to entangle the remaining pair of qubits. The $\text{SWAP}(1,2)$ gate can be represented as 
$$\text{SWAP}(1,2)=f_1 f_1^{\dagger}f_2 f_2^{\dagger}+ f_1^{\dagger} f_2 - f_1 f_2^{\dagger}
+ f_1^{\dagger} f_1 f_2^{\dagger} f_2,
$$
and, again, by straightforward multiplication we obtain
 \begin{equation*}
\begin{aligned}
\text{SWAP}(1,2)\otimes \text{Id}(3)&=f_1 f_1^{\dagger}f_2 f_2^{\dagger}f_3f_3^\dagger+ f_1^{\dagger} f_2f_3f_3^\dagger - f_1 f_2^{\dagger}f_3f_3^\dagger
+ f_1^{\dagger} f_1 f_2^{\dagger} f_2f_3f_3^\dagger\\
&+f_1 f_1^{\dagger}f_2 f_2^{\dagger}f_3^\dagger f_3+ f_1^{\dagger} f_2f_3^\dagger f_3 - f_1 f_2^{\dagger}f_3^\dagger f_3
+ f_1^{\dagger} f_1 f_2^{\dagger} f_2f_3^\dagger f_3.
\end{aligned}
\end{equation*}
After that, the gate $\text{Id}(1)\otimes\hat{J}(\gamma_{13}) $ is applied, which completely copies the gate $\text{Id}(1)\otimes\hat{J}(\gamma_{23}) $ with changed entanglement parameter. At last, to preserve the initial identification of qubits with coalitions, we interchange the states back using $\text{SWAP}(1,2)\otimes \text{Id}(3)$ once more time.
Thus, the whole effect of the $\hat{J_2}(\gamma_{12},\gamma_{13},\gamma_{23})$ on the initial state $\ket{000}$ can be described by the multiplication of the basis state $\ket{000}$ by the above-described gates in the corresponding order.  
Now, the entanglement gate $\hat{J_3}(\gamma_{123})$ has to be applied. Due to the discrete nature of the entanglement parameter $\gamma_{123}$, we have divided this section into smaller subsections describing each possible choice of the parameter $\gamma_{123}$ separately.

\subsubsection{Case $\gamma_{123}=0$ and action of $1$--qubit gates}
In case $\gamma_{123}=0$, the operator $\hat{J_3}(\gamma_{123})$ collapses into 
$\hat{J_3}(0)= \text{Id}(1)\otimes \text{Id}(2)\otimes \text{Id}(3)$. Thus, it can be described by the following element of QRA 
 \begin{equation*}
\begin{aligned}
\hat{J_3}(0)&=f_1f_1^\dagger f_2f_2^\dagger f_3f_3^\dagger+ f_1f_1^\dagger f_2f_2^\dagger f_3^\dagger f_3+f_1f_1^\dagger f_2^\dagger f_2 f_3f_3^\dagger+ f_1f_1^\dagger f_2^\dagger f_2  f_3^\dagger f_3\\
&+f_1^\dagger f_1 f_2f_2^\dagger f_3f_3^\dagger+ f_1^\dagger f_1 f_2f_2^\dagger f_3^\dagger f_3+f_1^\dagger f_1 f_2^\dagger f_2 f_3f_3^\dagger+ f_1^\dagger f_1 f_2^\dagger f_2  f_3^\dagger f_3,
\end{aligned}
\end{equation*}
which does not affect the state $\hat{J_2}(\gamma_{12},\gamma_{13},\gamma_{23})\ket{000}$.

Therefore, before the application of unitary operators,
 we obtain the state 
\begin{equation*}
\begin{aligned}
\hat{J_3}(0)\hat{J_2}(\gamma_{12},\gamma_{13},\gamma_{23})\ket{000}&=(\cos{\frac{\gamma_{12}}{2}}\cos{\frac{\gamma_{13}}{2}}\cos{\frac{\gamma_{23}}{2}}+i\sin{\frac{\gamma_{12}}{2}}\sin{\frac{\gamma_{13}}{2}}\sin{\frac{\gamma_{23}}{2}})f_1f_1^\dagger f_2f_2^\dagger f_3f_3^\dagger\\
&+ (\sin{\frac{\gamma_{12}}{2}}\sin{\frac{\gamma_{13}}{2}}\cos{\frac{\gamma_{23}}{2}}+i\cos{\frac{\gamma_{12}}{2}}\cos{\frac{\gamma_{13}}{2}}\sin{\frac{\gamma_{23}}{2}})f_1f_1^\dagger f_2^\dagger f_3^\dagger\\
&+(\sin{\frac{\gamma_{12}}{2}}\cos{\frac{\gamma_{13}}{2}}\sin{\frac{\gamma_{23}}{2}}+i\cos{\frac{\gamma_{12}}{2}}\sin{\frac{\gamma_{13}}{2}}\cos{\frac{\gamma_{23}}{2}})f_1^\dagger f_2f_2^\dagger f_3^\dagger\\
&+(\cos{\frac{\gamma_{12}}{2}}\sin{\frac{\gamma_{13}}{2}}\sin{\frac{\gamma_{23}}{2}}+i\sin{\frac{\gamma_{12}}{2}}\cos{\frac{\gamma_{13}}{2}}\cos{\frac{\gamma_{23}}{2}})f_1^\dagger f_2^\dagger f_3f_3^\dagger.\\
\end{aligned}
\end{equation*}

Now, the tensor product of three gates $\hat{U}_i, i=1,2,3,$ is applied to the state described above. This tensor product can be represented as
\begin{align*} \hat{U}_1(p_1)\otimes \hat{U}_2(p_2) \otimes \hat{U}_3(p_3) &=\left(\sin(p_1)(f_1-f_1^\dagger)+\cos(p_1)(f_1f_1^\dagger+f_1^\dagger f_1)\right)\\ &\otimes\left(\sin(p_2)(f_2-f_2^\dagger)+\cos(p_2)(f_2f_2^\dagger+f_2^\dagger f_2)\right)\\ &\otimes\left(\sin(p_3)(f_3-f_3^\dagger)+\cos(p_3)(f_3f_3^\dagger+f_3^\dagger f_3)\right),
\end{align*}
Whereas the serial circuit of more than two gates can be represented directly via multiplication, the sign-changing rule for the tensor product of two quantum gates cannot be simply generalized for the three gates. Thus, for the three-player cooperative game, it is necessary to establish the sign-changing rule for the general parallel circuit of three gates. When working with three gates, interactions between monomials become more complex and will have more possible effects. To handle this situation, instead of two artificial parameters, five parameters will be needed to define sign changing procedure.
\begin{enumerate}
    \item At first, we assign $a$ to monomials from the left side of multiplication with an odd number of terms of type $f_j$ and $f_j^\dagger f_j$.
    \item  Then, in the middle term of multiplication, we assign $b$ to monomials with odd number of terms of type $f_j$ and $f_j^\dagger$, $c$ to monomials, which have odd number of occurrences of terms of type $f_j^\dagger$ and $f_j$ and, at the same time, odd number of occurrences of terms of type $f_j$ and $f^\dagger_j f_j$, and $d$ to monomials, which have odd number of occurrences of terms of type $f_j$ and $f^\dagger_j f_j$.
    \item At last, we assign $e$ to monomials from the right side of multiplication with an odd number of occurrences of terms of type $f_j^\dagger$ and $f_j$. 
    \item Then, after performing the multiplication, we perform the reassignment: 
$$a,b,c,d,e,ad,be,abe,ade\rightarrow 1,$$ $$ab,ac,ae,ce,de,ace\rightarrow -1. $$ 
\end{enumerate}
Thus, before the multiplication and the reassignment, the tensor product can be represented as follows:
\begin{equation*}
    \begin{aligned}
    \hat{U}_1(p_1)\otimes \hat{U}_2(p_2) \otimes \hat{U}_3(p_3) =&\left(\sin(p_1)(af_1-f_1^\dagger)+\cos(p_1)(f_1f_1^\dagger+af_1^\dagger f_1)\right)\\
    &\left(\sin(p_2)(cf_2-bf_2^\dagger)+\cos(p_2)(f_2f_2^\dagger+df_2^\dagger f_2)\right)\\
    &\left(\sin(p_3)(ef_3-ef_3^\dagger)+\ cos(p_3)(f_3f_3^\dagger+f_3^\dagger f_3)\right).
    \end{aligned}
\end{equation*}
Alternatively, the representation of this gate in QRA can be obtained via subsequent application of the previously presented sign-changing rules for the pairs of gates. Since the considered tensor product has 64 non-zero elements, we omit its full representation and directly proceed to the case $\gamma_{123}=1$.

\subsubsection{Case $\gamma_{123}=1$}
In case $\gamma_{123}=1$, the QRA representation of the gate $$\hat{J_3}(1)=\left( \text{Id}(1)\otimes \text{CNOT}(2,3)\right) \left(\text{CNOT}(1,2)\otimes \text{Id}(3)\right) \left(\text{H}(1)\otimes  \text{Id}(2)\otimes  \text{Id}(3)\right)$$ has to be found.
The first part of the serial gate is $\left(\text{H}(1)\otimes  \text{Id}(2)\otimes  \text{Id}(3)\right)$, where
 $$\text{H}(1)=\frac{1}{\sqrt{2}}(f_1f_1^\dagger+f_1+f_1^\dagger- f_1^\dagger f_1).$$
 Since the Hadamard gate is in a tensor product with identity operators on the right side, it can be represented as a straightforward multiplication 
 \begin{align*}\text{H}(1)\otimes  \text{Id}(2)\otimes  \text{Id}(3)&=\frac{1}{\sqrt{2}}(f_1f_1^\dagger+f_1+f_1^\dagger- f_1^\dagger f_1)(f_2f_2^\dagger +f_2^\dagger f_2)(f_3f_3^\dagger +f_3^\dagger f_3).
\end{align*}
Then, the $\text{CNOT}(1,2)$ gate can be written down as 
$$\text{CNOT}(1,2)=f_1f_1^\dagger f_2f_2^\dagger+f_1f_1^\dagger f_2^\dagger f_2-f_1^\dagger f_1 f_2-f_1^\dagger f_1 f_2^\dagger.$$
 Thus, once more, we have
\begin{align*}\text{CNOT}(1,2)\otimes \text{Id}(3)&=(f_1f_1^\dagger f_2f_2^\dagger+f_1f_1^\dagger f_2^\dagger f_2-f_1^\dagger f_1 f_2-f_1^\dagger f_1 f_2^\dagger)(f_3f_3^\dagger +f_3^\dagger f_3).
\end{align*}
At last, according to the sign-changing procedure for the parallel circuit of two gates, we have 
\begin{align*}\text{Id}(1)\otimes \text{CNOT}(2,3)&=(f_1f_1^\dagger +f_1^\dagger f_1)\otimes (f_2f_2^\dagger f_3f_3^\dagger+f_2f_2^\dagger f_3^\dagger f_3-f_2^\dagger f_2 f_3-f_2^\dagger f_2 f_3^\dagger)\\
&=(f_1f_1^\dagger +af_1^\dagger f_1)(f_2f_2^\dagger f_3f_3^\dagger+f_2f_2^\dagger f_3^\dagger f_3-bf_2^\dagger f_2 f_3-bf_2^\dagger f_2 f_3^\dagger)\\
&=f_1f_1^\dagger f_2f_2^\dagger f_3f_3^\dagger+f_1f_1^\dagger f_2f_2^\dagger f_3^\dagger f_3-bf_1f_1^\dagger f_2^\dagger f_2 f_3-bf_1f_1^\dagger f_2^\dagger f_2 f_3^\dagger\\
&+af_1^\dagger f_1 f_2f_2^\dagger f_3f_3^\dagger+af_1^\dagger f_1 f_2f_2^\dagger f_3^\dagger f_3-abf_1^\dagger f_1f_2^\dagger f_2 f_3-abf_1^\dagger f_1f_2^\dagger f_2 f_3^\dagger\\
&=f_1f_1^\dagger f_2f_2^\dagger f_3f_3^\dagger+f_1f_1^\dagger f_2f_2^\dagger f_3^\dagger f_3-f_1f_1^\dagger f_2^\dagger f_2 f_3-f_1f_1^\dagger f_2^\dagger f_2 f_3^\dagger\\
&+f_1^\dagger f_1 f_2f_2^\dagger f_3f_3^\dagger+f_1^\dagger f_1 f_2f_2^\dagger f_3^\dagger f_3+f_1^\dagger f_1f_2^\dagger f_2 f_3+f_1^\dagger f_1f_2^\dagger f_2 f_3^\dagger.
\end{align*}
Thus, we are able to obtain the complete representation of the gate $\hat{J_3}(1)$ by multiplication of the above-presented terms in the corresponding order. Due to the extensive size of the gate $\hat{J_3}(1)$ and of the gates established in the previous subsections, the representation of the final state or the full parametric expression representing the quantum Shapley value can be non-informative and confusing. Therefore, we have decided to omit them. However, as it has been already demonstrated, we are able to perform quantum computing using the GAALOP. In the next section, it will be described how to measure the quantum states using GAALOP, in order to assign the resulting probabilities to the quantum Shapley value.

\section{Outcomes of the games and discussion}
At first, we calculate the resulting probabilities for the two-player quantum cooperative game using GAALOP to demonstrate the quantization code of the most simple instance. The following QRA code was compiled as a MATLAB script using GAALOPWeb.
\begin{lstlisting}[caption={Two-player cooperative game.}]
i = er1 * er2 ;
f1 = 0.5*( e1 + i * e3 );
f1T = 0.5*( e1 - i * e3 );
f2 = 0.5*( e2 + i * e4 );
f2T = 0.5*( e2 - i * e4 );
ket00 = f1 * f1T * f2 * f2T ;
J=cos(gamma/2)*(f1 * f1T * f2 * f2T+f1T * f1 * f2 * f2T+
f1 * f1T * f2T * f2+
f1T * f1 * f2T * f2)+
i*sin(gamma/2)*(-f1 * f2 -f1T * f2 + 
f1*f2T+f1T * f2T);
U1tensorU2=(sin(p1)*(a*f1-f1T)+cos(p1)*(f1*f1T+a*f1T*f1))*
(sin(p2)*(b*f2-
b*f2T)+cos(p2)*(f2*f2T+f2T*f2));
psi_final=U1tensorU2*J*ket00;
?probability_final_0=4*abs(ket00*psi_final)*
abs(ket00*psi_final);
?probability_final_2=4*abs(ket00*f2*psi_final)*
abs(ket00*f2*psi_final);
?probability_final_1=4*abs(ket00*f1*psi_final)*
abs(ket00*f1*psi_final);
?probability_final_12=4*abs(ket00*f2*f1*psi_final)*
abs(ket00*f2*f1*psi_final);
\end{lstlisting}
It provides a script describing the measurement of the resulting quantum states. After the symbolic substitution of $ab$ with $-1$ and $a,b$ with $1$, the final probabilities can be easily obtained for any given $\gamma$, $p_1$, and $p_2$. For example, for the choice $\gamma=0, p_1=3\pi/8, p_2=\pi/8$, we obtain
\begin{lstlisting}[caption={Two-player cooperative game: output of the resulting function.}]
probability_final_0 =0.1250
probability_final_1 = 0.7286
probability_final_12 =0.1250
probability_final_2 = 0.0214
\end{lstlisting}
The correctness of this result can be easily verified using \eqref{psif2}. The obtained probabilities can be straightforwardly substituted into \eqref{shapley2} to compute the quantum Shapley values of players. The previous code can be generalized for the case of a $3$-qubit system describing the three-player quantum cooperative game.

\begin{lstlisting}[caption={Three-player cooperative game.}]
i = er1 * er2 ;
f1 = 0.5*(e1 + i * e4);
f1T = 0.5*(e1 - i * e4);
f2 = 0.5*(e2 + i * e5);
f2T = 0.5*(e2 - i * e5);
f3 = 0.5*(e3 + i * e6);
f3T = 0.5*(e3 - i * e6);
ket000 = f1 * f1T * f2 * f2T*f3*f3T ;
Id1=f1*f1T+f1T*f1;
Id2=f2*f2T+f2T*f2;
Id3=f3*f3T+f3T*f3;
SWAP=f1 * f1T * f2 * f2T+f1T * f2-f1 * f2T +f1T * f1 * f2T * f2;
CNOT12=f1*f1T*f2*f2T+f1*f1T*f2T*f2-f1T*f1*f2-f1T*f1*f2T;
H1=1/sqrt(2)*(f1*f1T+f1+f1T-f1T*f1);
Id1tensorCNOT23=f1*f1T*f2*f2T*f3*f3T+f1*f1T*f2*f2T*f3T*f3-
f1*f1T*f2T*f2*f3-f1*f1T*f2T*f2*f3T+f1T*f1*f2*f2T*f3*f3T+
f1T*f1*f2*f2T*f3T*f3+f1T*f1*f2T*f2*f3+f1T*f1*f2T*f2*f3T;
J12=cos(gamma12/2)*(f1 * f1T * f2 * f2T+f1T * f1 * f2 * f2T+
f1 * f1T * f2T * f2+f1T * f1 * f2T * f2)+i*sin(gamma12/2)*
(-f1 * f2 -f1T * f2 + f1*f2T+f1T * f2T);
J13=cos(gamma13/2)*(f2 * f2T * f3 * f3T+f2T * f2 * f3 * f3T+
f2 * f2T * f3T * f3+f2T * f2 * f3T * f3)+i*sin(gamma13/2)*
(-f2 * f3 -f2T * f3 + f2*f3T+f2T * f3T);
J23=cos(gamma23/2)*(f2 * f2T * f3 * f3T+f2T * f2 * f3 * f3T+
f2 * f2T * f3T * f3+f2T * f2 * f3T * f3)+i*sin(gamma23/2)*
(-f2 * f3 -f2T * f3 + f2*f3T+f2T * f3T);
J2=SWAP*Id3*Id1*J13*SWAP*Id3*Id1*J23*J12*Id3;
J3=(1-gamma123)*Id1*Id2*Id3+gamma123*
Id1tensorCNOT23*CNOT12*Id3*H1*Id2*Id3;
U1tensorU2tensorU3=(-sin(p1)*sin(p2)*f1*f2+sin(p1)*sin(p2)*f1*f2T
+sin(p1)*cos(p2)*f1*f2T*f2+sin(p1)*cos(p2)*f1*f2*f2T-
sin(p1)*sin(p2)*f1T*f2+sin(p1)*sin(p2)*f1T*f2T
-sin(p1)*cos(p2)*f1T*f2T*f2-sin(p1)*cos(p2)*f1T*f2*f2T+
cos(p1)*sin(p2)*f1*f1T*f2-cos(p1)*sin(p2)*f1*f1T*f2T+
cos(p1)*cos(p2)*f1*f1T*f2T*f2+cos(p1)*cos(p2)*f1*f1T*f2*f2T-
cos(p1)*sin(p2)*f1T*f1*f2+cos(p1)*sin(p2)*f1T*f1*f2T
+cos(p1)*cos(p2)*f1T*f1*f2T*f2+cos(p1)*cos(p2)*f1T*f1*f2*f2T)*
cos(p3)*(f3*f3T+f3T*f3)+(-sin(p1)*sin(p2)*f1*f2+
sin(p1)*cos(p2)*f1*f2T*f2+sin(p1)*sin(p2)*f1T*f2T-
sin(p1)*cos(p2)*f1T*f2*f2T-cos(p1)*sin(p2)*f1*f1T*f2T+
cos(p1)*cos(p2)*f1*f1T*f2*f2T-cos(p1)*sin(p2)*f1T*f1*f2+
cos(p1)*cos(p2)*f1T*f1*f2T*f2)*sin(p3)*(f3-f3T)+
(-sin(p1)*sin(p2)*f1*f2T-sin(p1)*cos(p2)*f1*f2*f2T+
sin(p1)*sin(p2)*f1T*f2+sin(p1)*cos(p2)*f1T*f2T*f2-
cos(p1)*sin(p2)*f1*f1T*f2-cos(p1)*cos(p2)*f1*f1T*f2T*f2-
cos(p1)*sin(p2)*f1T*f1*f2T-cos(p1)*cos(p2)*f1T*f1*f2*f2T)*
sin(p3)*(f3-f3T);
psi_final=U1tensorU2tensorU3*J3*J2*ket000;
?probability_final_0=8*abs(ket000*psi_final)*
abs(ket000*psi_final);
?probability_final_1=8*abs(ket000*f1*psi_final)*
abs(ket000*f1*psi_final);
?probability_final_2=8*abs(ket000*f2*psi_final)*
abs(ket000*f2*psi_final);
?probability_final_3=8*abs(ket000*f3*psi_final)*
abs(ket000*f3*psi_final);
?probability_final_12=8*abs(ket000*f2*f1*psi_final)*
abs(ket000*f2*f1*psi_final);
?probability_final_13=8*abs(ket000*f3*f1*psi_final)*
abs(ket000*f3*f1*psi_final);
?probability_final_23=8*abs(ket000*f3*f2*psi_final)*
abs(ket000*f3*f2*psi_final);
?probability_final_123=8*abs(ket000*f3*f2*f1*psi_final)*
abs(ket000*f3*f2*f1*psi_final);
\end{lstlisting}

To demonstrate the functionality of our approach, we have computed the quantized version of the game from Example \ref{priklad2} under different settings. The first instance is depicted in Figure \ref{1}. 

\begin{figure}[h!]
  \centering
  \includegraphics[scale=0.1503]{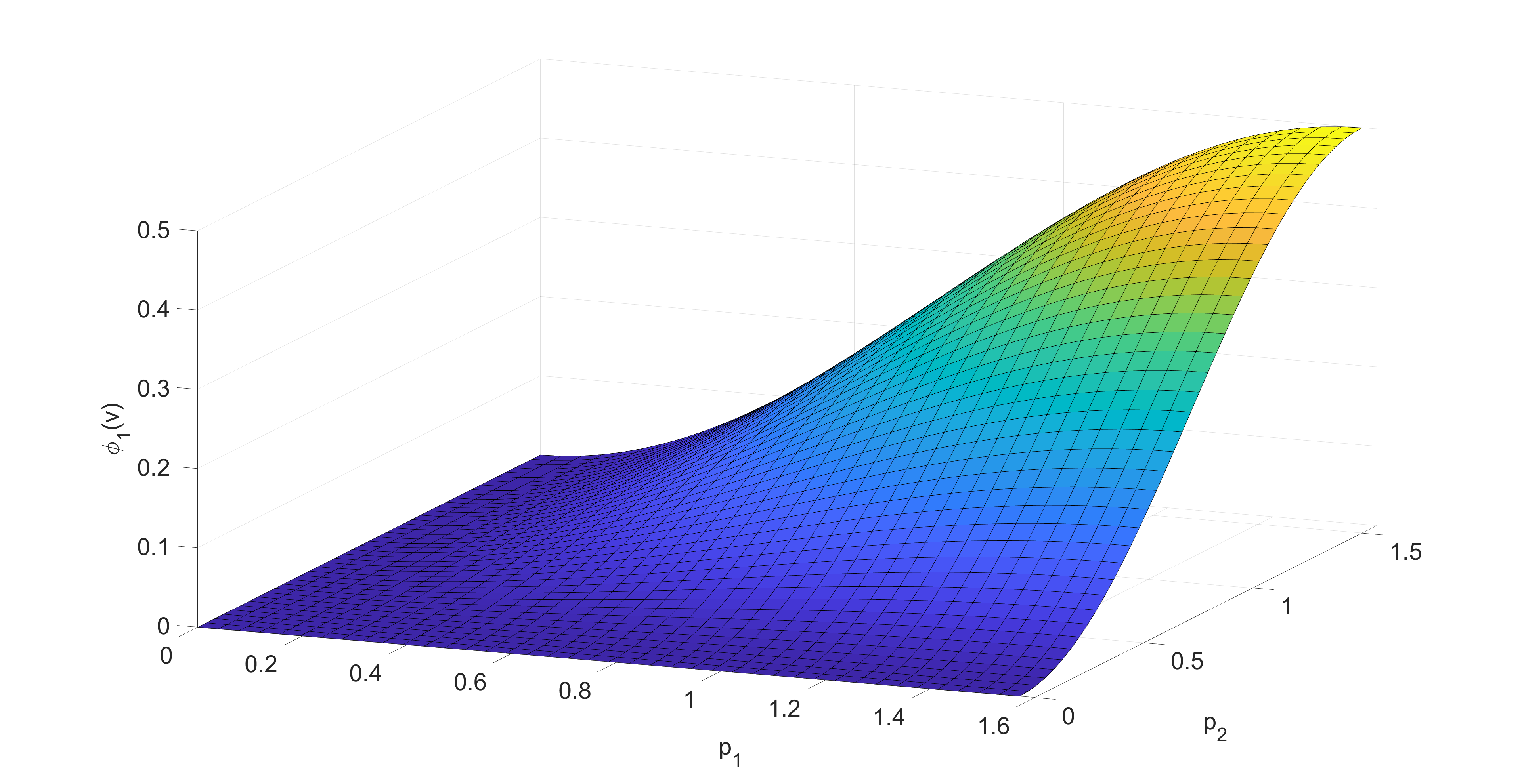}
  \caption{The quantum Shapley value $\tilde{\phi}_1$ for $\gamma_{123}=0$, $\gamma_{12}=0$, $\gamma_{13}=0$, $\gamma_{23}=0$, and $p_2=0$.}
  \label{1}
\end{figure}

When there is no bond between players and the player with the greatest weight is indifferent to a game process, player $1$ might benefit from cooperation with player $3$ ($3$ benefits as well due to the symmetric setting). The instance with the "stronger" bond between players $1$ and $3$ is depicted in Figure \ref{2}. The maximal change of the intitial state remains the best possible option for the players $1$ and $3$. However, whereas the payoff in case $p_1=p_2=\pi/2$ has decreased, players begin to benefit from not changing state at all. The instance with the maximal bond between players $1$ and $3$ is depicted in Figure \ref{3}. It can be seen that under $\gamma_{13}=\pi/2$ players' payoffs have decreased and now they maximally benefit from cooperation in a new sense: their actions have to be equivalent. Thus, both of them should completely change the initial state or not operate with it at all. The case when all players are entangled via a 3-qubit gate is depicted in Figure \ref{4}.
This setting has an analogical effect as the case depicted in Figure \ref{2}. The last case that was considered is presented in Figure \ref{5}. Compared to all previously considered instances, this last setting demonstrates that the maximal possible payoff obtained by the player does not have to correspond to boundary decisions $p_1=p_2=\pi/2$ or $p_1=p_2=0$, but can be found inside of the considered intervals. 

\begin{figure}[h!]
  \centering
  \includegraphics[scale=0.1503]{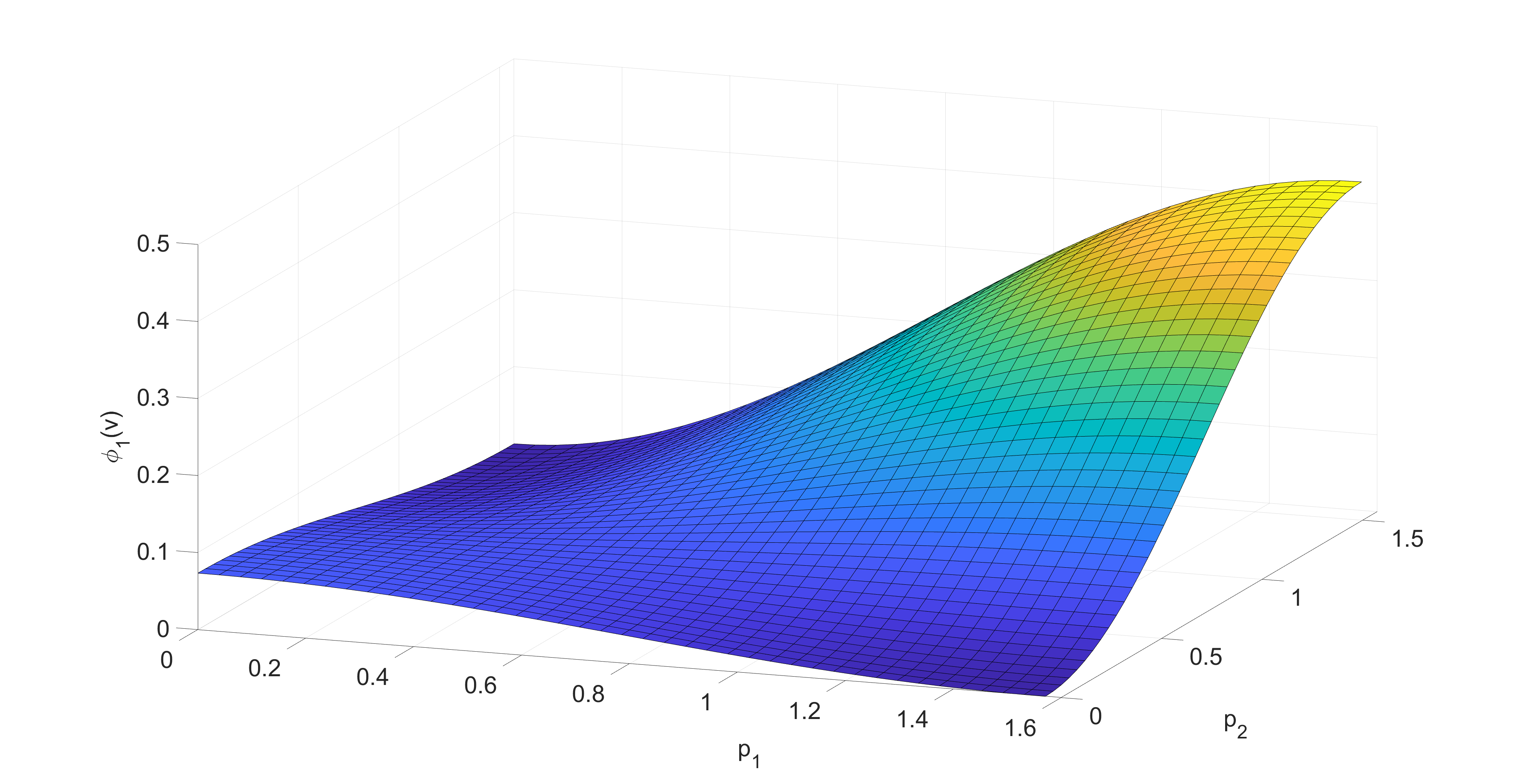}  
  \caption{The quantum Shapley value $\tilde{\phi}_1$ for $\gamma_{123}=0$, $\gamma_{12}=0$, $\gamma_{13}=\pi/4$, $\gamma_{23}=0$, and $p_2=0$.}
  \label{2}
\end{figure}

\begin{figure}[h!]
  \centering
  \includegraphics[scale=0.1503]{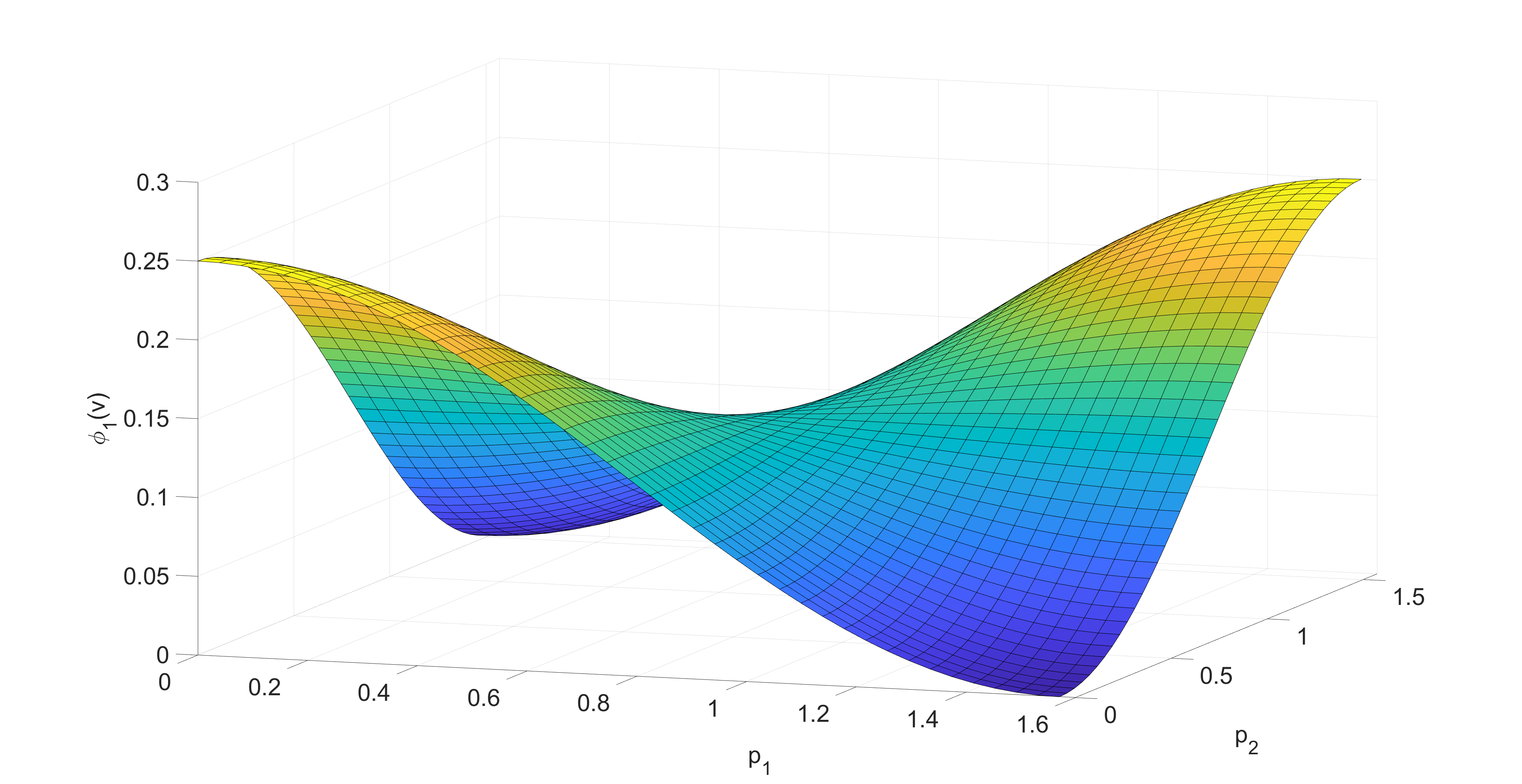} 
  \caption{The quantum Shapley value $\tilde{\phi}_1$ for $\gamma_{123}=0$, $\gamma_{12}=0$, $\gamma_{13}=\pi/2$, $\gamma_{23}=0$, and $p_2=0$.}
  \label{3}
\end{figure}

\newpage
\begin{figure}[h!]
  \centering
  \includegraphics[scale=0.1503]{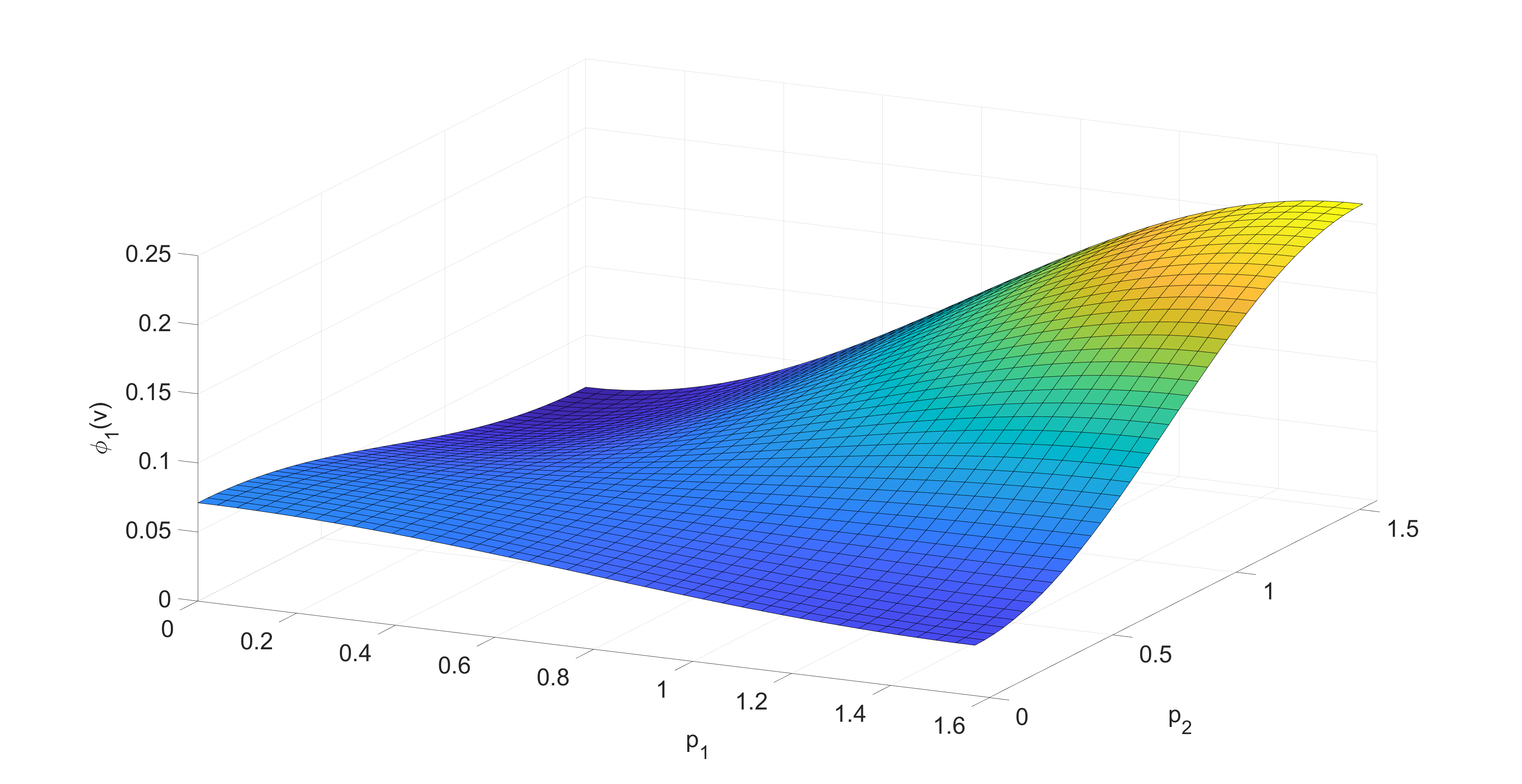}  
  \caption{The quantum Shapley value $\tilde{\phi}_1$ for $\gamma_{123}=1$, $\gamma_{12}=0$, $\gamma_{13}=\pi/4$, $\gamma_{23}=0$, and $p_2=0$.}
  \label{4}
\end{figure}

\begin{figure}[h!]
  \centering
  \includegraphics[scale=0.1503]{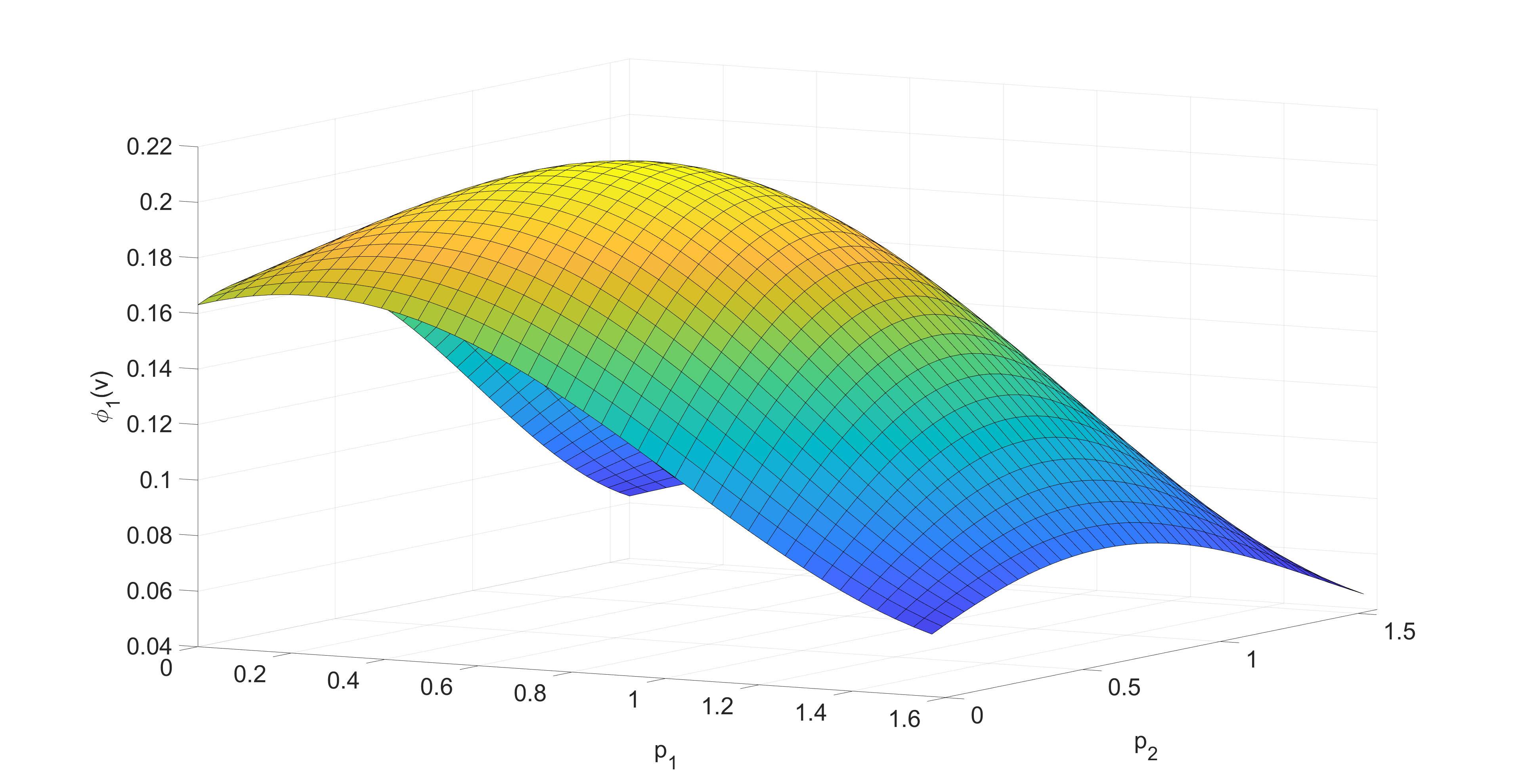} 
  \caption{The quantum Shapley value $\tilde{\phi}_1$ for $\gamma_{123}=1$, $\gamma_{12}=\pi/3$, $\gamma_{13}=3\pi/4$, $\gamma_{23}=\pi/3$, and $p_2=3\pi/4$.}
  \label{5}
\end{figure}

Figures \ref{1}-\ref{5} have demonstrated that the quantum Shapley value redistributes the payoffs according to the pre-agreement between players and takes into consideration their "acceptance" of the initial state. Indeed, players with smaller weights can benefit in situations when the player with the greatest payoff is indifferent to a game process and there is no strong bond between them. However, when equally strong players are maximally related (entangled), their distribution can only decrease, once again implying that pre-agreements only damage their prosperity. 

It can be concluded, that the proposed quantization scheme for two-player and three-player cooperative games has allowed for non-trivial results. In particular, we have demonstrated, that, depending on the properties of $v$, the strong bond between equally strong players, that are not created within the negotiation process, may decrease their payoffs (in case cooperation is beneficial). This peculiar outcome can be explained by an additional risk taken by the participants due to the existence of the pre-agreement, which can be interpreted as the initial probabilistic coalition structure. Alternatively, when cooperation in the classical cooperative games is not the best option due to the definition of $v$, then the existence of pre-agreement may improve players' payoffs. These results indicate the potential of the proposed quantization of the cooperative games. Our study has also demonstrated that QRA allows for efficient computation within the quantum game theory framework. Moreover, QRA can be even used to perform automated proofs using the geometric algebra calculator GAALOPWeb. Thus, the language of QRA and its implementation in the GAALOP have provided us with a convenient tool to perform quantum computing and to study quantum cooperative games, in particular. The future implementation of the tensor product sign-changing rule within the GAALOP shall further simplify computations with the multiple qubit states using QRA.

\end{document}